\newtheorem{theorem}{Theorem}[section]
\newtheorem{proposition}[theorem]{Proposition}
\newtheorem{lemma}[theorem]{Lemma}
\newtheorem{corollary}[theorem]{Corollary}
\theoremstyle{definition}
\newtheorem{example}[theorem]{Example}
\newcommand{\bigzero}{\mbox{\normalfont\Large\bfseries 0}}
\begin{document}
	
\author[M. Doostalizadeh]{Mina Doostalizadeh}
\address{Department of Mathematics, Tarbiat Modares University, 14115-111 Tehran Jalal AleAhmad Nasr, Iran}
\email{d\_mina@modares.ac.ir;  m.doostalizadeh@gmail.com}	
 
\author[A. Moussavi]{Ahmad Moussavi}
\address{Department of Mathematics, Tarbiat Modares University, 14115-111 Tehran Jalal AleAhmad Nasr, Iran}
\email{moussavi.a@modares.ac.ir; moussavi.a@gmail.com}
\author[P. Danchev]{Peter  Danchev}
\address{Institute of Mathematics and Informatics, Bulgarian Academy of Sciences, 1113
Sofia, Bulgaria}
\email{ danchev@math.bas.bg; pvdanchev@yahoo.com} 
 
\title[Unit uniquely clean rings]{Unit uniquely clean rings}
\keywords{Clean rings, uniquely clean rings, exchange rings, Morita context.}
\subjclass[2010]{16S34, 16U60}

\maketitle
  
\begin{abstract}
We define the class of {\it unit uniquely clean} rings ({\it UnitUC} for short), that is a  common generalization of  uniquely clean rings and strongly nil clean rings. Abelian {\it UnitUC} rings are uniquely clean and {\it UnitUC} rings with nil Jacobson radical are strongly nil clean. These rings also generalize the UUC and CUC rings, defined by Calugareanu-Zhou in Mediterranean J. Math. (2023), which are rings whose clean elements are uniquely clean. These rings are also represent a natural generalization of the Boolian rings in that a ring is {\it UnitUC} if, and only if, it is exchange and Boolean modulo the Jacobson radical.  The behavior of {\it UnitUC} rings under group ring and matrix ring extensions
is investigated. Several examples are provided to explain and delimit the results.
\end{abstract}

\section{Introduction}
Let $R$ be an associative ring with $1 \neq 0$. The notations of Jacobson radical, the group of units, the set of nilpotent elements and the set of all idempotent elements of $R$ are denoted by $J(R)$, $U(R)$, $N(R)$ and $Id(R)$, respectively. An element $r \in R$ is called
nil-clean if there is an idempotent $e \in R$ and a nilpotent $b \in R$ such that
$r = e+b$; the element $r$ is further called strongly nil-clean if the idempotent
and the nilpotent can be chosen such that $be = eb$. A ring is called nil-clean
(strongly nil-clean) if each of its elements is nil-clean (strongly nil-clean).
The notion of nil-clean rings was first introduced by Diesl in \cite{Diesl} and has been
studied by many researchers for the last decade. The study of nil-clean
rings was motivated by the study of clean rings. The influence on the structure of rings of properties defined elementwise is intensively studied in the literature. Clean rings and their generalizations, rings with special types of units, generalizations of commutative rings have been investigated in relation
to various global ring properties. Introduced by Nicholson
in \cite{nicolson} and \cite{nicolson2}, an element $a$ of $R$ is defined to be clean if $a = e + u$,
where $e \in R$ is an idempotent and $u \in U(R)$; moreover, $a$ is called strongly
clean if the idempotent and the unit can be chosen such that $eu = ue$. A
ring is defined to be (strongly) clean if each of its elements is (strongly)
clean. The class of clean rings contains semiperfect rings, unit-regular rings,
strongly $\pi$-regular rings, etc. (See \cite{Camillo}, \cite{Camillo2}, and \cite{Burgess}). Clean rings have
been extensively investigated in the past forty years with studies related to
topology, functional analysis, and the Kothe conjecture (see \cite{Ara},  \cite{Camillo3}, \cite{Han}, \cite{Lee}, \cite{Ster}, etc). Clean rings were initially studied in \cite{nicolson}
as a class of rings satisfying the exchange property which was introduced
by Crawley and Jonsson in 1964 in \cite{Crawley}.\\
 The class of clean rings is quite large and includes, for example, semiperfect rings. A ring is said to be uniquely clean if each element can be uniquely expressed as the sum of an idempotent and a unit. The concept of uniquely clean rings has been studied by Anderson and Camillo \cite{AC} in the case of commutative rings, and by Nicholson and Zhou \cite{NZ} in the case of noncommutative rings.

It was proven in \cite{NZ} that a ring $R$ is uniquely clean if, and only if, $R$ modulo its Jacobson radical $J(R)$ is a Boolean ring, idempotents can be lifted modulo $J(R)$, and the idempotents of $R$ are central.

On the other hand, by \cite{nicolson}, $a\in R$ is called {\it uniquely clean} if there exists a unique $e \in {\rm Id}(R)$ such that $a-e \in U(R)$. In particular, a ring $R$ is said to be {\it uniquely clean} (or just {\it UC} for short) if every element in $R$ is uniquely clean.
In a similar vein, expanding the first part of the above concepts, in \cite{zhou1} a ring is called {\it CUC} if any clean element is uniquely clean, and a ring is called {\it UUC} if any unit is uniquely clean.\par

 We say  the idempotents $e$ and $f$ are equivalent, denoted by $e \sim f$, in a ring $R$ if there exists $u \in U(R)$ such that $e = u^{-1}fu$. It is clear that if $e$ and $f$ are idempotents such that $e \sim 0$ and $f \sim 1$, then $e=0$ and $f=1$. Inspired by what we have stated so far, we call a ring $R$ a {\it unit uniquely clean ring} ({\it UnitUC ring} for short)   if it is clean, and if $e + u = f + v$ for two clean representations, then $e \sim f$.\par

The class of {\it unit uniquely clean} rings,  is a  common generalization of  uniquely clean rings and strongly nil clean rings. Abelian {\it UnitUC} rings are uniquely clean and {\it UnitUC} rings with nil Jacobson radical are strongly nil clean. These rings also generalize the UUC and CUC rings, defined by \\
Calugareanu-Zhou in Mediterranean J. Math. (2023), which are rings whose clean elements are uniquely clean. These rings are also represent a natural generalization of the Boolian rings in that a ring is {\it UnitUC} if, and only if, it is exchange and Boolean modulo the Jacobson radical. 
 
The relations between all of these notions are shown by the next diagrams:\\\\
 $$\indent\indent\indent\indent\indent\indent\indent\indent\indent\indent\indent\indent   CUC$$
 $$ \indent\indent\indent\indent\indent\indent\indent\indent\indent\indent\indent\indent   \downarrow$$
 $$\text{Uniquely clean}\longrightarrow \text{Uniquely strongly clean}\longrightarrow UnitUC\longrightarrow UUC$$ $$ \indent\indent\indent \uparrow$$
        $$ \indent\indent\indent \text{Strongly nil-clean}$$\\
 
  The behavior of {\it UnitUC} rings under group ring and matrix ring extensions
is investigated. Several examples are provided to explain and delimit the results.\par

 In the next section, we give some critical examples and properties of {\it UnitUC} rings and their extensions (see, for instance, Propositions~\ref{impor lemma} and \ref{triangular}). The subsequent third section is devoted to the exhibition of some other remarkable characterization behaviors of this class of rings under some standard extending procedures like matrix rings and group rings (see Theorems~\ref{series}, \ref{nil}, \ref{division}, \ref{19}, \ref{thm imp}). We also pose some challenging properties which, hopefully, will stimulate a further research on the subject.
\section{The Results}
 
 To begin, we present the following lemma, the proof of which is routine.

\begin{lemma}\label{prod}
    A direct product $\prod_{i \in I}R_i$ is   UnitUC if, and only if, each $R_i$ is   UnitUC.
\end{lemma}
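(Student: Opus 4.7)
The plan is to exploit the fact that in a product $R = \prod_{i \in I} R_i$ everything decomposes coordinatewise: $(e_i)_i$ is idempotent in $R$ iff each $e_i$ is idempotent in $R_i$, $(u_i)_i$ is a unit in $R$ iff each $u_i$ is a unit in $R_i$, and a clean decomposition in $R$ is nothing other than a simultaneous choice of clean decompositions in every factor. Similarly, the conjugacy relation $\sim$ transfers well, because for $w = (w_i) \in U(R)$ one has $w^{-1}(f_i)w = (w_i^{-1} f_i w_i)$; so if $e \sim f$ in $R$ then $e_i \sim f_i$ in every $R_i$, and conversely a family of conjugators $w_i \in U(R_i)$ with $e_i = w_i^{-1} f_i w_i$ assembles into a single unit $(w_i) \in U(R)$ witnessing $e \sim f$.

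For the direction $(\Leftarrow)$, assume each $R_i$ is UnitUC. First, $R$ is clean: given $a = (a_i) \in R$, each component has a clean decomposition $a_i = e_i + u_i$, which assemble to $a = (e_i) + (u_i)$. Next, if $(e_i) + (u_i) = (f_i) + (v_i)$ are two clean decompositions of the same element of $R$, then at every coordinate $e_i + u_i = f_i + v_i$ in $R_i$, and the UnitUC hypothesis on $R_i$ supplies $w_i \in U(R_i)$ with $e_i = w_i^{-1} f_i w_i$. The tuple $w := (w_i)$ is a unit of $R$ and yields $e = w^{-1} f w$, so $e \sim f$.

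For the direction $(\Rightarrow)$, assume $R$ is UnitUC and fix an index $j \in I$. Cleanness of $R_j$ is immediate by projection: pad any $a_j \in R_j$ to $a \in R$ (say with $0$'s elsewhere), write $a = e + u$ in $R$, and project to the $j$-th coordinate. For uniqueness up to $\sim$, suppose $a_j = e_j + u_j = f_j + v_j$ are two clean decompositions in $R_j$. Embed into $R$ by using the trivial clean decomposition $0 + 1$ at every coordinate $i \neq j$: both $((e_i)_i,(u_i)_i)$ and $((f_i)_i,(v_i)_i)$ (with $e_i = f_i = 0$ and $u_i = v_i = 1$ for $i \neq j$) are then clean decompositions of the same element of $R$. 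Applying the UnitUC hypothesis on $R$ yields a global unit $w = (w_i) \in U(R)$ with $(e_i) = w^{-1}(f_i)w$; projection to the $j$-th coordinate gives $e_j = w_j^{-1} f_j w_j$ with $w_j \in U(R_j)$, so $e_j \sim f_j$ in $R_j$.

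The only point requiring any care is the embedding step in $(\Rightarrow)$, where one must verify that filling in $0 + 1$ at the spectator coordinates produces legitimate clean decompositions in $R$ (which it does, since $0$ is idempotent and $1$ is a unit in each $R_i$); everything else is componentwise bookkeeping, which is why the author labels the lemma routine.
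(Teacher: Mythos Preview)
Your proof is correct and is exactly the componentwise bookkeeping the paper has in mind when it labels the lemma routine; the paper gives no argument of its own, so there is nothing to compare against.
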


\begin{lemma} \label{first lemma}
    Let $e$ and $f$ be idempotent elements such that $e - f \in U(R)$. Then, $(1 - e) \sim f$.
\end{lemma}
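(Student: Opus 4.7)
The natural candidate for the conjugating unit is $u := e - f$ itself, which is given to be a unit. The plan is to show that conjugation by $u$ swaps $e$ with $1-f$ (equivalently, swaps $1-e$ with $f$), and the only thing that needs checking is a one-line identity using $e^2 = e$ and $f^2 = f$.

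Concretely, I would compute the two products
\begin{align*}
eu &= e(e-f) = e^2 - ef = e - ef, \\
u(1-f) &= (e-f)(1-f) = e - ef - f + f^2 = e - ef.
\end{align*}
Thus $eu = u(1-f)$, and since $u$ is a unit this gives $u^{-1} e u = 1-f$, i.e.\ $e \sim 1-f$.

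To obtain the statement in the form requested, I would then observe that conjugation is an automorphism, so $u^{-1}(1-e)u = 1 - u^{-1}eu = 1 - (1-f) = f$, proving $(1-e) \sim f$.

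There is no real obstacle here; the entire content is the identity $eu = u(1-f)$, and the only small thing to be careful about is that equivalence of idempotents is compatible with taking complements (i.e.\ $e \sim g$ iff $1-e \sim 1-g$), which is immediate from the definition via unit conjugation.
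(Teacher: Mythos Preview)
Your argument is correct and is essentially the same as the paper's: both take $u = e - f$ as the conjugating unit and verify via a one-line idempotent computation that $u^{-1}(1-e)u = f$. The paper computes $(1-e)u = -(1-e)f = -(1-f-u)f = uf$ directly, whereas you first get $eu = u(1-f)$ and then pass to complements, but this is merely a cosmetic difference.
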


\begin{proof}
Let us assume that $u := e - f \in U(R)$. Then we have
   $$u^{-1}(1-e)u= u^{-1}(1-e)(e - f)=-u^{-1}(1-e)f=-u^{-1}(1-f -u)f=f.$$
\end{proof}

\begin{example}
    Idempotents and central nilpotents are   UnitUC in any ring.
\end{example}
\begin{proof}
If $e^2 = e$, then we can express $e$ as $(1 - e) + (2e - 1)$. Assume that $e = f + u$, where $f^2 = f$ and $u \in U(R)$. Therefore, $e - f \in U(R)$, and according to Lemma \ref{first lemma}, $(1 - e) \sim f$.

If $q \in Nil(R)$, then we can express $q$ as $1 + (q-1)$. Assume that $q = f + u$, where $f^2 = f$ and $u \in U(R)$. Therefore, $f=u(1-u^{-1}q) \in Id(R) \cap U(R)=\{ 1 \}$.
\end{proof}

\begin{corollary}
    Every Boolean ring is   UnitUC.
\end{corollary}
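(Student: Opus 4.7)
The plan is to reduce this immediately to the preceding example. In a Boolean ring $R$, every element $a$ satisfies $a^2 = a$, so every element of $R$ is an idempotent. The preceding example asserts that idempotents are UnitUC elements in any ring, and the proof displayed there produces the canonical clean representation $a = (1-a) + (2a-1)$ and shows that for any alternative clean decomposition $a = f + u$ one obtains $(1-a) \sim f$ via Lemma~\ref{first lemma}.

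So concretely I would argue: fix $a \in R$ arbitrary. Since $a^2 = a$, the previous example applies to $a$, giving that $a$ is a (uniquely) clean element in the equivalence sense demanded by the definition of UnitUC. Because this holds for every $a \in R$, the ring $R$ is in particular clean, and any two clean representations $a = e+u = f+v$ satisfy $e \sim f$ (both being equivalent to $1-a$). This is exactly the defining property of a UnitUC ring.

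I do not expect any genuine obstacle, since the statement is essentially a restatement of the preceding example applied pointwise. The only minor subtlety to flag is that the previous example is phrased elementwise (``idempotents are UnitUC in any ring''), whereas the definition of UnitUC is phrased as a ring-level property; but in a Boolean ring these two viewpoints coincide because every element is an idempotent, so cleanness of the ring and uniqueness up to $\sim$ of the idempotent part both transfer automatically from the element-level statement.
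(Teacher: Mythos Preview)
Your proposal is correct and matches the paper's approach: the paper gives no explicit proof for this corollary, treating it as immediate from the preceding example that idempotents are UnitUC in any ring. Your observation that in a Boolean ring every element is an idempotent, so cleanness and the required uniqueness up to $\sim$ both follow pointwise from that example, is exactly the intended argument.
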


\begin{lemma}
    Let $R$ be   UnitUC. Then for every $r \in R$ and $e \in Id(R)$, we have $er(1-e) = f - e$ and $(1-e)re = f'-e$, where $f, f' \in \text{Id}(R)$ and $f \sim e \sim f'$.
\end{lemma}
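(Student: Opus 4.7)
The plan is to exhibit the idempotents $f, f'$ and the conjugating units explicitly, rather than invoking the UnitUC hypothesis in any essential way. Set $x := er(1-e)$ and $y := (1-e)re$. The key observation is that both $x$ and $y$ are square-zero, because $(1-e)e=0$ and $e(1-e)=0$ respectively. These square-zero "off-diagonal" pieces of the Peirce decomposition at $e$ are what make all the subsequent computations collapse.

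For the first assertion I would define $f := e + x$, so that $er(1-e) = f - e$ tautologically, and then verify $f^{2}=f$ using $ex = x$, $xe = 0$, and $x^{2}=0$. To show $f \sim e$, set $u := 1 + x$. Since $x^{2}=0$, $u$ is a unit with $u^{-1}=1-x$, and a one-line expansion
\[
u^{-1}e u = (1-x)\,e\,(1+x) = e + ex - xe - xex = e + x = f
\]
gives the conjugacy. The second assertion is proved by the mirror argument: put $f' := e + y$, check $(f')^{2}=f'$ from $ye=y$, $ey=0$, $y^{2}=0$, and conjugate by $v := 1 - y$ (whose inverse is $1+y$) to obtain $v^{-1}ev = e + y = f'$.

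There is no real obstacle; everything reduces to the nilpotence of $x$ and $y$, which forces the Peirce "corner" perturbations of $e$ to be idempotents conjugate to $e$. I would just be careful to state the equivalence $\sim$ in the correct direction, namely exhibiting a unit $u$ with $f = u^{-1}eu$ (the definition given earlier in the paper has $e$ and $f$ playing symmetric roles, so this is fine). It is worth flagging in the writeup that the UnitUC assumption is not actually used here — the conclusion holds in any ring — so the lemma's true content is the observation that $f-e$ and $f'-e$ parametrize the upper- and lower-triangular Peirce entries by idempotents in the conjugacy class of $e$.
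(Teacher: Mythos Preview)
Your argument is correct: with $x=er(1-e)$ one has $ex=x$, $xe=0$, $x^{2}=0$, so $f=e+x$ is idempotent and $(1-x)e(1+x)=e+x$, and the $y=(1-e)re$ case is symmetric.

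This is a genuinely different route from the paper. The paper produces the element $a=1-(e+er(1-e))$ and writes it in two clean ways,
\[
a \;=\; e + \bigl(1-2e-er(1-e)\bigr) \;=\; \bigl(e+er(1-e)\bigr) + \bigl(1-2(e+er(1-e))\bigr),
\]
verifies that the two ``idempotent parts'' $e$ and $e+er(1-e)$ really are idempotents and the two ``unit parts'' are units, and then invokes the UnitUC hypothesis to conclude $e\sim e+er(1-e)$, i.e.\ that \emph{some} $u\in U(R)$ with $u^{-1}eu-e=er(1-e)$ exists. You instead exhibit the conjugating unit explicitly as $1+er(1-e)$ (respectively $1-(1-e)re$), bypassing the hypothesis entirely. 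What your approach buys is exactly what you flag: the statement holds in \emph{any} ring, so the lemma is a general Peirce-decomposition fact, not a special feature of UnitUC rings. What the paper's approach buys is that it illustrates the UnitUC mechanism in action; but since the downstream use of this lemma is only Corollary~\ref{ex iso} (abelian UnitUC $\Leftrightarrow$ uniquely clean), your stronger version serves just as well there.
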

\begin{proof}
Let $R$ be  UnitUC. Choose $a = 1 - (e + er(1 - e))$. One easily checks that $a = e + (1 - 2e - er(1-e))= (e+er(1-e))+ (1-2(e+er(1-e)))$. Clearly, $(1- 2e+er(1-e))^{-1}= (1-er(1-e))(1-2e)$, $(e+er(1-e))^2=e+er(1-e)$ and $(1-2(e+er(1-e)))^2=1$. By the uniqueness, we get $u^{-1}eu-e = er(1-e)$ for some $u \in U(R)$.  Similarly, there exists $v \in U(R)$ such that $v^{-1}ev - e = (1-e)re$.
\end{proof}

\begin{corollary}\label{ex iso}
   A ring  $R$ is abelian    UnitUC if, and only if, it is  uniquely clean ring.
\end{corollary}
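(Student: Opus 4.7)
The plan is to prove both implications essentially by unwinding the definitions together with the known structure theorem for uniquely clean rings.

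For the forward direction, I will assume $R$ is abelian and UnitUC and show that every element has a \emph{unique} clean decomposition. Since the UnitUC hypothesis already includes cleanness, I only need uniqueness. Suppose $a=e+u=f+v$ with $e,f\in\mathrm{Id}(R)$ and $u,v\in U(R)$. The UnitUC assumption yields $w\in U(R)$ with $e=w^{-1}fw$. Now the abelian hypothesis says $f$ is central, so $w^{-1}fw=f$, forcing $e=f$ and hence $u=v$. This immediately shows $R$ is uniquely clean.

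For the converse, I will use the Nicholson--Zhou characterization quoted earlier in the introduction: a uniquely clean ring has central idempotents, which is exactly the abelian condition. So it remains to verify UnitUC. But this is immediate: uniquely clean rings are in particular clean, and if $e+u=f+v$ are two clean decompositions, uniqueness gives $e=f$, so $e\sim f$ via the unit $w=1$.

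I do not foresee any real obstacle here; the corollary is essentially a definitional consequence of the preceding lemma (which guarantees $er(1-e)$ is a difference of equivalent idempotents) combined with the observation that equivalence of idempotents collapses to equality in the abelian case. The one thing to be careful about is not to confuse the two equivalent versions of uniqueness: ``$e\sim f$'' in UnitUC versus ``$e=f$'' in uniquely clean, but abelianness makes these coincide, which is precisely the content of the corollary.
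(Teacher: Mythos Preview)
Your proof is correct and matches the paper's (implicit) approach: the paper states the corollary without proof, and the intended argument is precisely that in an abelian ring the relation $e\sim f$ collapses to $e=f$, while the converse uses the Nicholson--Zhou fact that uniquely clean rings have central idempotents. Your writeup fills in these details accurately.
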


\begin{proposition}\label{impor lemma}
    Let $R$ be a   UnitUC ring. The following hold:

(1) For every $0\neq e \in Id(R)$ and $u, v \in U(R)$, $u+v \neq e$. In particular, $R$ is a UUC ring.

(2) For any $u, v \in U(R)$, $u+v \neq 1$.

(3) For any $0\neq e,f \in Id(R)$, and $u,v \in U(eRe)$, $ u+v\neq f$.

(4) For any $0\neq e \in Id(R)$, and $u,v \in U(eRe)$, $ u+v\neq e$.

(5) For every ideal $I \subseteq J(R)$, $R/I$ is a    UnitUC ring. The converse holds, provided idempotents lift modulo $I$.

\end{proposition}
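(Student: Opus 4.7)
For part (1), my approach is to assume for contradiction that $u + v = e$ for some $u, v \in U(R)$ and some nonzero idempotent $e$, then rewrite the equation as $u = e + (-v)$. This exhibits $u$ as idempotent-plus-unit, while $u = 0 + u$ is another such decomposition. The UnitUC hypothesis applied to these two clean representations of the single element $u$ forces $0 \sim e$, hence $e = 0$, a contradiction. The ``in particular'' claim that $R$ is UUC follows by the very same comparison: any clean representation $u = e + w$ of a unit, held against $u = 0 + u$, forces $e = 0$, so the decomposition is unique. Part (2) is (1) specialised to $e = 1$.

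For parts (3) and (4), the idea is to reduce to (1) by promoting corner units to full units via a standard trick. Given $u, v \in U(eRe)$ with corner inverses $u', v' \in eRe$ (so $uu' = u'u = e$, etc.), one checks, using only $eu = ue = u$ for $u \in eRe$, that $u - (1-e)$ and $v + (1-e)$ are units of $R$ with inverses $u' - (1-e)$ and $v' + (1-e)$, respectively. Their sum in $R$ is exactly $u + v$, so if $u + v = f$ with $f \in Id(R) \setminus \{0\}$, we obtain two units of $R$ summing to a nonzero idempotent of $R$, contradicting (1). Part (4) is (3) with $f := e$.

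For part (5), the forward direction rests on the fact that any UnitUC ring is clean, hence exchange, whence idempotents lift modulo every ideal of $R$ (Nicholson's theorem). Given two clean representations $\bar a = \bar e_1 + \bar u_1 = \bar e_2 + \bar u_2$ in $R/I$, I lift $\bar e_i$ to idempotents $e_i \in R$ and lift $\bar u_i$ to units $u_i \in U(R)$ (units lift because $I \subseteq J(R)$). The elements $e_1 + u_1$ and $e_2 + u_2$ need not agree in $R$, but their difference lies in $I \subseteq J(R)$; absorbing it into $u_2$ yields a unit $u_2'$ with $e_1 + u_1 = e_2 + u_2'$, which is now a genuine pair of clean decompositions in $R$. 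The UnitUC property of $R$ gives $e_1 \sim e_2$ in $R$, and this projects to $\bar e_1 \sim \bar e_2$ in $R/I$. For the converse, assume idempotents lift mod $I$ and $R/I$ is UnitUC. A short argument (lift a clean decomposition of any element from $R/I$ using both idempotent and unit lifting) shows $R$ is clean; given two clean representations in $R$, I project, apply UnitUC in $R/I$, and lift the conjugating unit $\bar w$ to some $w \in U(R)$. The idempotents $e_1$ and $w^{-1} e_2 w$ of $R$ then differ by an element of $J(R)$, so the classical fact that two idempotents equal modulo $J(R)$ are conjugate by a unit of $R$ completes the proof.

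The main obstacle is the forward half of (5): a priori, clean decompositions in the quotient $R/I$ need not pull back to clean decompositions in $R$, since the idempotent parts might not lift. The resolution is the (not entirely obvious, but well-known) automatic idempotent-lifting property enjoyed by clean rings in their guise as exchange rings. Once this is in hand the rest is bookkeeping, and parts (2)--(4) are routine reductions to the substantive core, part (1).
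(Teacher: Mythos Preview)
Your argument is correct and, for parts (1)--(4) and the forward direction of (5), essentially identical to the paper's: the same comparison $u = e + (-v)$ versus $u = 0 + u$ in (1), the same corner-to-global promotion $u \mapsto u \pm (1-e)$ in (3), and the same lift-idempotents-then-absorb-the-discrepancy-into-$J(R)$ manoeuvre in (5). You are somewhat more explicit than the paper about \emph{why} idempotents lift in the forward direction of (5) (clean $\Rightarrow$ exchange $\Rightarrow$ idempotents lift modulo every ideal), where the paper simply asserts ``since $R$ is clean we can assume that $e,f\in Id(R)$''.

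The only substantive divergence is in the converse of (5). The paper's proof of this proposition actually omits the converse entirely, establishing it later (Lemma~\ref{ess lemma for ex}(2)) under the hypothesis ``$R$ clean'', which is equivalent to yours once $R/I$ is known to be clean. That later argument lifts the conjugating unit as you do, but then finishes differently: it uses the fact that $2\in J(R)$ (inherited from $R/I$) to deduce that $e - (1 - kfk^{-1})$ is a unit, and then applies Lemma~\ref{first lemma} to conclude conjugacy. You instead invoke directly the classical fact that two idempotents congruent modulo $J(R)$ are conjugate by a unit of $R$. Both routes are valid; yours is a little cleaner in that it bypasses the $2\in J(R)$ detour, while the paper's keeps everything internal to results already established in the article.
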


\begin{proof}
(1)  If $u+v=e$, we have $u=e-v$. Since $R$ is a unit uniquely clean ring, it must be that $e\sim 0$, so $e=0$.

(2) It is sufficient to assume $e=1$ in (1).

(3) Assuming $u+v=f$, then $u+(1-e)$, $v-(1-e) \in U(R)$, because $(u+(1-e)(u^{-1}+(1-e))= 1 = (v-(1-e))(v^{-1}-(1-e))$, so $(u+(1-e) + (v-(1-e)) = f$, which contradicts (1).

(4) Clearly, (3) implies.

(5) Since $R$ is clean, we have that $\overline{R}=R/I$ is also clean. If $\bar{a} \in \overline{R}$ such that there exist $\bar{e},\bar{f} \in Id(\overline{R})$ and $\bar{u},\bar{u'} \in U(\overline{R})$ with $\bar{a}=\bar{e}+\bar{u}=\bar{f}+\bar{u'}$. Since $I \subseteq J(R)$,  $u,u' \in U(R)$, and since $R$ is clean we can assume that $e,f \in Id(R)$. On the one hand, $a-(e+u), a-(f+u') \in J(R)$, so there exists $j,j' \in J(R)$ such that $a=e+(u+j)=f+(u'+j')$. Since $U(R)+J(R) \subseteq U(R)$, we have that $u+j , u'+j' \in U(R)$. Since $R$ is a   UnitUC ring, there exists $v \in U(R)$ such that $e=v^{-1}fv$, which results in $\bar{e}=\bar{v}^{-1}\bar{f}\bar{v}$.
\end{proof}

\begin{example} \label{matrix}
For any ring $S\neq 0$ and any integer $n \ge 2$, $M_n(S)$ is not a   UnitUC ring.
\end{example}
\begin{proof}
It can easily be verified that 

$$\begin{pmatrix}
1 & 0 & \cdots & 0 & 0\\
0 & 1 & \cdots & 0 & 0\\
\vdots & \vdots & \ddots & \vdots & \vdots\\
0 & 0 & \cdots & 1 & 0\\
0 & 0 & \cdots & 0 & 1\\
\end{pmatrix}=
\begin{pmatrix}
1 & 1 & \cdots & 0 & 0\\
0 & 1 & \cdots & 0 & 0\\
\vdots & \vdots & \ddots & \vdots & \vdots\\
0 & 0 & \cdots & 1 & 1\\
1 & 0 & \cdots & 0 & 0\\
\end{pmatrix} +
\begin{pmatrix}
0 & -1 & \cdots & 0 & 0\\
0 & 0 & \cdots & 0 & 0\\
\vdots & \vdots & \ddots & \vdots & \vdots\\
0 & 0 & \cdots & 0 & -1\\
-1 & 0 & \cdots & 0 & 1\\
\end{pmatrix},$$

which contradicts Proposition \ref{impor lemma}(2).
\end{proof}

\begin{lemma}\label{corner not}
    Let R be a   UnitUC ring. Then For any $n > 1$, there does not exist $0 \neq e \in Id(R)$ such that $eRe \cong M_n(S)$ for some ring $S$.
\end{lemma}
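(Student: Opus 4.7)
The plan is a direct proof by contradiction that piggybacks on Example~\ref{matrix} and the corner-ring version of the sum-of-units obstruction, namely Proposition~\ref{impor lemma}(4).

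Suppose, toward a contradiction, that $0\neq e\in \mathrm{Id}(R)$ and that there is a ring isomorphism $\varphi\colon eRe \xrightarrow{\sim} M_n(S)$ for some ring $S$ and some $n\ge 2$. Note that $\varphi$ sends the multiplicative identity of $eRe$ (which is $e$) to the identity matrix $I_n\in M_n(S)$, and that $\varphi$ restricts to a group isomorphism $U(eRe)\to U(M_n(S))$.

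Next I would reuse the explicit decomposition displayed in the proof of Example~\ref{matrix}: that calculation exhibits two matrices $A,B\in M_n(S)$, each of which is easily checked to be invertible (the cyclic-shift-type matrices there are manifestly units, as one can write down their inverses by undoing the shift), with $A+B=I_n$. Pulling these back through $\varphi^{-1}$ yields elements $u:=\varphi^{-1}(A)$ and $v:=\varphi^{-1}(B)$ of $U(eRe)$ satisfying
\[
u+v = \varphi^{-1}(A+B) = \varphi^{-1}(I_n) = e.
\]
This is exactly the configuration forbidden by Proposition~\ref{impor lemma}(4): a nonzero idempotent $e$ written as a sum of two units of the corner ring $eRe$. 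The resulting contradiction closes the argument.

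The only step requiring any care is the first reduction, namely that units of $eRe$ correspond bijectively to units of $M_n(S)$ under $\varphi$; this is automatic since $\varphi$ is a ring isomorphism and both rings have identity (the corner ring has identity $e$). Everything else is essentially a transcription of Example~\ref{matrix} inside the corner $eRe$, so I do not anticipate a genuine obstacle; the content of the lemma is precisely that the matrix obstruction of Example~\ref{matrix} upgrades from the whole ring to any corner.
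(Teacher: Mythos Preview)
Your proof is correct and is exactly the argument the paper intends: the paper's own proof is the one-line remark that the claim is ``straightforward using Example~\ref{matrix} and Proposition~\ref{impor lemma}(4)'', and you have simply spelled out that transfer of the $I_n=A+B$ decomposition through the isomorphism $\varphi$ to contradict Proposition~\ref{impor lemma}(4). The only cosmetic quibble is your description of the matrices as ``cyclic-shift-type''; they are not pure shifts, but they are indeed units (e.g.\ by a cofactor expansion along the last row, or simply because the displayed identity forces each to be the difference of the identity and a unit, hence invertible by the usual $2\times 2$ check generalized), so nothing in the argument is affected.
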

\begin{proof}
    The proof is straightforward using Example \ref{matrix} and Proposition \ref{impor lemma}(4).
\end{proof}

\begin{lemma}
    If $R$ is a   UnitUC ring, then $2 \in J(R)$.
\end{lemma}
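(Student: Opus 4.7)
The plan is to prove $1 + 2r \in U(R)$ for every $r \in R$, which is equivalent to $2 \in J(R)$. First, I would sharpen Proposition~\ref{impor lemma}(2) to the statement that the sum of any two units is never a unit: if $u, v, w \in U(R)$ with $u + v = w$, then multiplying by $w^{-1}$ yields $w^{-1}u + w^{-1}v = 1$ with both summands in $U(R)$, directly contradicting Proposition~\ref{impor lemma}(2). In particular $2 = 1 + 1 \notin U(R)$. Moreover, using the centrality of $2$, any hypothetical inverse $s$ of $2r$ would satisfy $2(rs) = 1 = (sr) \cdot 2$, forcing $2 \in U(R)$; hence $2r \notin U(R)$ for every $r \in R$.

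Next, I would argue by contradiction: suppose $a := 1 + 2r \notin U(R)$ for some $r$. By cleanness, $a = e + u$ with $e \in \mathrm{Id}(R)$ and $u \in U(R)$. The case $e = 0$ is excluded (else $a = u$ would be a unit), and the case $e = 1$ is excluded (else $u = 2r$, forbidden by the previous paragraph). So $e$ is a nontrivial idempotent, and rearrangement gives $u = (1 - e) + 2r$, which presents the unit $u$ as the sum of the nonzero idempotent $1-e$ and the non-unit $2r$.

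The crux (and main obstacle) is converting this almost-clean identity for $u$ into a second, genuine clean decomposition violating UnitUC. Since the trivial decomposition $u = 0 + u$ forces any clean decomposition $u = f + v$ to satisfy $f \sim 0$, hence $f = 0$, it suffices to exhibit a clean decomposition of $u$ with nonzero idempotent part. I would clean-decompose the non-unit $2r = g + w$ (so $g \in \mathrm{Id}(R)$, $w \in U(R)$, and $g \neq 0$ because $2r \notin U(R)$), then set $h := (1-e) + g$ and analyze $u = h + w$. If one arranges---possibly after replacing $g$ by a conjugate via Lemma~\ref{first lemma}---that $h$ is genuinely idempotent (equivalently, that $(1-e)g + g(1-e) = 0$) while $w$ remains a unit, then $u = h + w$ is a clean decomposition of $u$ with $h \neq 0$, contradicting UnitUC. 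Securing the orthogonality $(1-e)g + g(1-e) = 0$ is the delicate step; Lemma~\ref{first lemma} and the corner analysis in Lemma~\ref{corner not} should provide the right conjugation to place $g$ into the required position relative to $1-e$, completing the argument.
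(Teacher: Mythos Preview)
Your argument is sound through step~4: the observation that no two units can sum to a unit, hence $2r\notin U(R)$ for all $r$, and the consequent dichotomy forcing the idempotent $e$ in $1+2r=e+u$ to be nontrivial, are all correct. The gap is exactly where you flag it: producing a clean decomposition $u=h+w$ with $h\neq 0$. Writing $2r=g+w$ and setting $h=(1-e)+g$ requires $(1-e)g+g(1-e)=0$ for $h$ to be idempotent, and neither Lemma~\ref{first lemma} nor Lemma~\ref{corner not} yields this. Lemma~\ref{first lemma} only relates two idempotents whose \emph{difference} is already a unit; it does not let you move $g$ into a position orthogonal to $1-e$. Worse, conjugating the pair $(g,w)$ by some $v\in U(R)$ gives a clean decomposition of $v^{-1}(2r)v=2v^{-1}rv$, not of $2r$, so even a lucky conjugation would not feed back into the identity $u=(1-e)+2r$. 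As written, the argument stalls here with no mechanism to close it.

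The paper takes a completely different, and much shorter, route. It does not work with an arbitrary $r$ at all. Instead, assuming $2\notin J(R)$, the exchange property (applied to $2r$ for an $r$ with $1-2r$ not left-invertible) produces a \emph{nonzero idempotent $e\in 2R$}, say $e=2b$. With $b'=be$ one checks that $1-3e$ is a unit (inverse $(1-e)-b'$), and then the single element $2-3e$ admits the two clean decompositions
\[
1+(1-3e)=(1-e)+(1-2e),
\]
forcing $1\sim 1-e$ by UnitUC, hence $e=0$, a contradiction. The key idea you are missing is to exploit $e\in 2R$: this is what makes $1-3e$ a unit and allows two decompositions of the \emph{same} element to be written down explicitly, bypassing any orthogonality juggling.
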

\begin{proof}
    If $2 \notin J(R)$, there exists $0 \neq e=e^2 \in 2R$. Hence $e = 2b$, where $b \in R$. We assume that $b':=be$, then $eb' = b' = b'e$. Then
$u := 1 - 3e$ is a unit with inverse $(1 - e) - b'$. It follows from $1+(1-3e) = (1-e)+(1-2e)$ that $1-e \sim 1$, i.e., $e = 0$. This is a contradiction.
\end{proof}

\begin{lemma}\label{ess lemma for ex}
    Let $\phi: R \longrightarrow S$ be an epimorphism of rings such that $\text{Ker}(\phi) \subseteq J(R)$. Then:

1) If R is a   UnitUC ring, then S is also   UnitUC.

2) If S is   UnitUC and $R$ is clean, then $R$ is a   UnitUC ring.
\end{lemma}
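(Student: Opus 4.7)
The plan is to reduce both parts to Proposition \ref{impor lemma}(5), using the isomorphism $S \cong R/\ker(\phi)$ together with the hypothesis $\ker(\phi) \subseteq J(R)$.

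For part (1), I would apply the forward direction of Proposition \ref{impor lemma}(5) directly with $I := \ker(\phi)$; since $R$ is UnitUC and $I \subseteq J(R)$, that result immediately yields that $R/I \cong S$ is UnitUC, with nothing further to check.

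For part (2), I would argue directly rather than invoking the converse half of Proposition \ref{impor lemma}(5) (which requires idempotents to lift modulo $I$). Suppose $a \in R$ admits two clean decompositions $a = e + u = f + v$, which exist because $R$ is clean by hypothesis. Applying $\phi$ gives $\bar a = \bar e + \bar u = \bar f + \bar v$ in $S$, so the UnitUC property of $S$ produces some $\bar w \in U(S)$ with $\bar e = \bar w^{-1}\bar f\bar w$. Choose any preimage $w \in R$ of $\bar w$; because $\ker(\phi) \subseteq J(R)$, the element $w$ is itself a unit of $R$. Then $e$ and $w^{-1}fw$ are two idempotents of $R$ coinciding modulo $J(R)$. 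Invoking the classical fact that idempotents congruent modulo $J(R)$ are conjugate by a unit, one obtains $t \in U(R)$ with $e = t^{-1}(w^{-1}fw)t = (wt)^{-1}f(wt)$, so $e \sim f$ in $R$, and $R$ is UnitUC.

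The only points requiring care are the two standard lifting facts used in part (2): that a preimage of a unit through a $J(R)$-congruence is itself a unit, and that two idempotents agreeing modulo $J(R)$ are conjugate by a unit in $R$. Both are classical and already implicit in the proof of Proposition \ref{impor lemma}(5), so no substantive new obstacle arises; the argument is essentially a repackaging of that proposition for the epimorphism setting.
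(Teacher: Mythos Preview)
Your proposal is correct and follows the paper's route almost exactly: part (1) is identical, and in part (2) both you and the paper push the two clean decompositions through $\phi$, lift the conjugating unit back to $R$ using $\ker(\phi)\subseteq J(R)$, and then conclude that $e$ and a conjugate of $f$ agree modulo $J(R)$.

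The only substantive difference is in how the last step is finished. The paper does not quote the general ``idempotents congruent modulo $J(R)$ are conjugate'' fact; instead it first observes that $2\in J(S)$ (since $S$ is UnitUC) and hence $2\in J(R)$, so that $e-(1-kfk^{-1})\in U(R)$, and then applies Lemma~\ref{first lemma} to obtain $e\sim f$. Your appeal to the classical conjugacy result is equally valid and slightly more conceptual; the paper's version has the advantage of staying entirely within the tools already developed (Lemma~\ref{first lemma} and the lemma that $2\in J(R)$). One small inaccuracy: the idempotent-conjugacy fact is not really ``implicit in the proof of Proposition~\ref{impor lemma}(5)'', which only goes in the forward direction; it is, however, standard and needs no further justification.
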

\begin{proof}
(1) By Proposition \ref{impor lemma}(5), there is nothing left to prove.

(2) Suppose $ e+u = f+v $ where $ e, f \in \text{Id}(R)$ and $ u, v \in U(R) $. Then, there exists $ d \in U(S) $ such that $\phi(e) = d\phi(f)d^{-1} $. Since $ \phi $ is surjective, there exists $ k \in R $ such that $ \phi(k) = d $. Since $d \in U(S)$ and $\text{Ker}(\phi) \subseteq J(R)$, we have $ k \in U(R) $. So, $ \phi(e-kfk^{-1}) = 0 $. This implies that $ e-kfk^{-1} \in J(R) $. Since $2 \in J(S)$ and $ \text{Ker}(\phi) \subseteq J(R) $, we have $ 2 \in J(R)$, so $ e-(1-kfk^{-1}) \in U(R)$. Therefore, by Lemma \ref{first lemma}, we have $e \sim f $.
\end{proof}

\begin{proposition}\label{triangular}

 The following statements are equivalent:

(1) $R$ is a   UnitUC ring.

(2) $T_n(R)$ is   UnitUC for all $n \in \mathbb{N}$.

(3)  $T_n(R)$ is  UnitUC for some $n \in \mathbb{N}$.

\end{proposition}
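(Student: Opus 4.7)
The plan is to reduce the proposition to the machinery already developed in Lemmas \ref{prod} and \ref{ess lemma for ex} by exploiting the canonical surjection
\[
\phi : T_n(R) \longrightarrow R^n, \qquad (a_{ij}) \longmapsto (a_{11}, a_{22}, \ldots, a_{nn}).
\]
Its kernel $K$ is the ideal of strictly upper triangular matrices. Since $K^n = 0$, this ideal is nilpotent and therefore $K \subseteq J(T_n(R))$, so $\phi$ satisfies the hypothesis of Lemma \ref{ess lemma for ex}. Moreover, Lemma \ref{prod} tells us that $R^n$ is UnitUC if and only if $R$ is UnitUC. With these two facts at hand, the whole proposition should fall out by running Lemma \ref{ess lemma for ex} in each direction.

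For the implication $(3)\Rightarrow(1)$, I would assume $T_n(R)$ is UnitUC for some $n$, apply Lemma \ref{ess lemma for ex}(1) to $\phi$ to deduce that $T_n(R)/K \cong R^n$ is UnitUC, and then invoke Lemma \ref{prod} to conclude that $R$ itself is UnitUC.

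For the implication $(1)\Rightarrow(2)$, I would assume $R$ is UnitUC; Lemma \ref{prod} then yields that $R^n$ is UnitUC, and the aim is to invoke Lemma \ref{ess lemma for ex}(2) to transfer this back to $T_n(R)$. The second clause of that lemma requires $T_n(R)$ to be clean. Since $R$ is UnitUC it is in particular clean, and one appeals to the standard fact that triangular matrix rings over a clean ring are again clean. Lemma \ref{ess lemma for ex}(2) then gives that $T_n(R)$ is UnitUC. The implication $(2)\Rightarrow(3)$ is immediate.

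The only non-formal ingredient is the cleanness of $T_n(R)$ needed in the $(1)\Rightarrow(2)$ direction; this is a classical result but sits outside the lemmas proved in this section, so it is the one step requiring an external citation. Everything else is a routine application of the $J(R)$-lifting framework already established in the paper.
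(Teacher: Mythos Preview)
Your proof is correct and follows essentially the same route as the paper: the paper uses the identical surjection $\phi:T_n(R)\to R^n$ with strictly upper triangular kernel, invokes Proposition~\ref{impor lemma}(5) (equivalently Lemma~\ref{ess lemma for ex}(1)) together with Lemma~\ref{prod} for $(3)\Rightarrow(1)$, and for the reverse direction uses Lemma~\ref{prod}, the cleanness of $T_n(R)$, and Lemma~\ref{ess lemma for ex}(2). Your remark that the cleanness of $T_n(R)$ is the one externally cited ingredient matches the paper's treatment exactly.
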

\begin{proof}
(2) $\Rightarrow $ (3) is obvious.

(3) $\Rightarrow $ (1): Assume $I=\{ (a_{ij}) \in T_n(R) : a_{ii}=0, \text{ for each } 1\leq i \leq n\}$. Then $T_n(R)/I \cong R^n$, so by Proposition \ref{impor lemma}(5), we have that $R^n$ is a   UnitUC ring. Thus, Lemma \ref{prod} implies that $R$ is a   UnitUC  ring.

(1) $\Rightarrow $ (3): Assume $\phi : T_n(R) \longrightarrow R^n$ with $\phi((a_{i,j}))= (a_{11}, \ldots, a_{nn})$. Clearly, $\phi$ is a surjective homomorphism, and $\text{Ker}(\phi) \subseteq J(T_n(R))$. Moreover, since $R$ is clean, it follows that $T_n(R)$ is clean. Furthermore, by Lemma \ref{prod}, $R^n$ is a   UnitUC ring. Thus, Lemma \ref{ess lemma for ex} implies that $T_n(R)$ is a   UnitUC ring.
\end{proof}

Let $R$ be a ring and $\alpha : R \to R$ is a ring endomorphism and $R[[x, \alpha]]$ denotes the ring of skew formal power series over $R$; that is all formal power series in $x$ with coefficients from $R$ with multiplication defined by $xr = \alpha(r)x$ for all $r \in R$. In particular, $R[[x]] = R[[x, 1_R]]$ is the ring of formal power series over $R$.

\begin{proposition}\label{series}
let $R$ be a ring with an endomorphism $\alpha : R \to R$. Then $R[[x, \alpha]]$ is   UnitUC if, and only if, $R$ is   UnitUC  ring.
\end{proposition}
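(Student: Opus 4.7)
My plan is to reduce both implications to Lemma \ref{ess lemma for ex} via the evaluation-at-zero map $\phi : R[[x,\alpha]] \to R$ defined by $\phi\bigl(\sum_{i \geq 0} a_i x^i\bigr) = a_0$. Since multiplication in the skew power series ring is governed by $a_i x^i \cdot b_j x^j = a_i \alpha^i(b_j) x^{i+j}$, the coefficient of $x^0$ in a product depends only on the two constant terms, so $\phi$ is a surjective ring homomorphism whose kernel is the two-sided ideal consisting of the series with zero constant term.

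The first technical step is to verify $\Ker(\phi) \subseteq J(R[[x,\alpha]])$. For $y \in \Ker(\phi)$ and any $z \in R[[x,\alpha]]$, every coefficient of $x^k$ in $(yz)^n$ vanishes for $k < n$, so the formal sum $\sum_{n \geq 0}(yz)^n$ is a well-defined element of $R[[x,\alpha]]$ and is readily checked to invert $1-yz$; the symmetric argument handles $1-zy$. The twist by $\alpha$ is harmless because $\Ker(\phi)$ remains a two-sided ideal and the $x$-adic order of $(yz)^n$ still grows with $n$, so the series still converges in the $x$-adic sense.

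Given this containment, the forward direction is immediate from Lemma \ref{ess lemma for ex}(1): if $R[[x,\alpha]]$ is UnitUC then so is its quotient $R$. For the converse, Lemma \ref{ess lemma for ex}(2) reduces the task to showing that $R[[x,\alpha]]$ is clean whenever $R$ is UnitUC (hence clean). Given $a = \sum_{i \geq 0} a_i x^i \in R[[x,\alpha]]$, I would use the cleanness of $R$ to write $a_0 = e + u$ with $e \in \text{Id}(R)$ and $u \in U(R)$; then
\[
a \;=\; e \;+\; \Bigl(u + \sum_{i \geq 1} a_i x^i\Bigr),
\]
and the second summand is a unit because it equals a unit plus an element of $J(R[[x,\alpha]])$. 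Thus $a$ is clean in $R[[x,\alpha]]$.

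The main obstacle is really just the Jacobson-radical containment in the second paragraph, since everything else follows by direct invocation of the already-proved transfer Lemma \ref{ess lemma for ex}. The only delicate point in that containment is handling the noncommutativity introduced by $\alpha$, but this is absorbed by the observation that $\Ker(\phi)$ is two-sided and closed under the $x$-adic topology, so no further hypothesis on $\alpha$ (such as surjectivity) is needed.
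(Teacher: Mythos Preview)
Your proof is correct and follows essentially the same route as the paper: both use the evaluation map $\phi:R[[x,\alpha]]\to R$, observe that its kernel lies in $J(R[[x,\alpha]])$, and invoke the transfer Lemma~\ref{ess lemma for ex}. The only difference is that you supply the details the paper merely asserts (namely that $\Ker(\phi)\subseteq J(R[[x,\alpha]])$ via the $x$-adic convergence argument, and that $R$ clean implies $R[[x,\alpha]]$ clean), and you use Lemma~\ref{ess lemma for ex}(1) for the forward direction where the paper cites the equivalent Proposition~\ref{impor lemma}(5).
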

\begin{proof}
    Assume $I=R[[x, \alpha]]x $. Then $R[[x, \alpha]]/I \cong R$, so by Proposition \ref{impor lemma}(5), we have that $R$ is a   UnitUC ring. Conversely, we consider $ \phi: R[[x, \alpha]] \longrightarrow R $ defined by $ \sum a_ix^i \mapsto a_0 $. Then, by Lemma \ref{ess lemma for ex}, the proof is complete. Note that $ J(R[[x, \alpha]]) = J(R) + I $ and $ U(R[[x, \alpha]]) = U(R) + I $.
\end{proof}

\begin{example}
Let $R$ be a ring. Then the following are equivalent:

(1) $R$ is   UnitUC.

(2) $R[x, \alpha]/\left\langle x^n \right\rangle$ is   UnitUC for all $n \in \mathbb{N}$.

(3)  $R[x, \alpha]/\left\langle x^n \right\rangle$ is   UnitUC for Some $n \in \mathbb{N}$.
\end{example}

\begin{proof}
     We consider $ \phi: R[x, \alpha]/\left\langle x^n \right\rangle \longrightarrow R$ defined by $ \sum_{i=0}^{n-1} a_ix^i \mapsto a_0 $. Then, by Lemma \ref{ess lemma for ex}, the proof is complete. Note that $J(R[x, \alpha]/\left\langle x^n \right\rangle) = J(R) + R[x, \alpha]/\left\langle x^n \right\rangle x $ and $ U(R[x, \alpha]/\left\langle x^n \right\rangle) = U(R) + R[x, \alpha]/\left\langle x^n \right\rangle x $.
\end{proof}

\begin{theorem} \label{bool}
     If $R$ is a  UnitUC ring, then $R/J(R)$ is a Boolean ring. 
\end{theorem}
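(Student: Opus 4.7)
The plan is to reduce to the case $J(R)=0$ and characteristic $2$ and then prove directly that every element is idempotent. By Proposition~\ref{impor lemma}(5), the quotient $\bar R:=R/J(R)$ is UnitUC; by the preceding lemma $2\in J(R)$, so $\bar R$ has characteristic~$2$; and $J(\bar R)=0$ automatically. Replacing $R$ by $\bar R$, I may assume $R$ is UnitUC with $J(R)=0$ and $2=0$, and the goal is to show every $a\in R$ satisfies $a^{2}=a$. Note that for any clean decomposition $a=e+u$ with $u=1$ we immediately get $a=e+1$ and so $a^{2}=(e+1)^{2}=e+1=a$ in characteristic~$2$. It therefore suffices to prove $U(R)=\{1\}$.

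Let $u\in U(R)$. The key preparatory observation is that $1+u\notin U(R)$: otherwise $u+(1+u)=1$ (using $2=0$) would exhibit $1$ as a sum of two units, contradicting Proposition~\ref{impor lemma}(2). Moreover $1+u$ is already displayed as a clean sum with idempotent part $1$ and unit part $u$, so by UnitUC any other clean decomposition $1+u=g+w$ must have $g\sim 1$, forcing $g=1$ and $w=u$. Thus $1+u$ is a non-unit whose only clean decomposition is $(1,u)$. Suppose for contradiction that $u\neq 1$; equivalently, $1+u\neq 0$.

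I would then invoke the exchange property of $R$ (which follows from cleanness) together with $J(R)=0$: every nonzero right ideal of an exchange ring with zero Jacobson radical contains a nonzero idempotent. Applied to $(1+u)R$, there exists a nonzero idempotent $e=(1+u)r$ for some $r\in R$; note $e\neq 1$, else $1+u$ would be right invertible and hence a unit. The identity $e=r+ur$, i.e.\ $ur=r+e$, combined with the clean decomposition of $r$ itself, is used to produce two distinct clean decompositions of a carefully chosen auxiliary element (for example, of the element $ur$ or $e+u$), whose idempotent parts are simultaneously equivalent to $e$ and to $0$; by UnitUC this forces $e\sim 0$, so $e=0$, contradicting the choice of $e$.

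The main obstacle is executing the final paragraph cleanly: extracting the contradiction from the nonzero idempotent $e\in(1+u)R$ requires a delicate interplay of Proposition~\ref{impor lemma}(1)--(4) with Lemma~\ref{corner not}, which forbids nontrivial matrix corners $eRe\cong M_{n}(S)$. This is the ingredient that blocks the survival of nonzero square-zero nilpotents in the setting $J(R)=0$; such nilpotents would otherwise generate $T_{2}(S)$-type subrings incompatible with the UnitUC hypothesis. Once $u=1$ is established for all $u\in U(R)$, the reduction at the start finishes the proof, yielding that $R$, hence the original $R/J(R)$, is Boolean.
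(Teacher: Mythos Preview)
Your reduction to the case $J(R)=0$ and $2=0$ is fine, and the observation that it suffices to show $U(R)=\{1\}$ is a legitimate reformulation. The problem is that the heart of your argument---the paragraph where you extract a contradiction from a nonzero idempotent $e\in(1+u)R$---is not a proof but a sketch of a hope. You write that one ``is used to produce two distinct clean decompositions of a carefully chosen auxiliary element (for example, of the element $ur$ or $e+u$), whose idempotent parts are simultaneously equivalent to $e$ and to $0$,'' but you never exhibit such decompositions, and in the next paragraph you yourself flag this as ``the main obstacle.'' As written there is no mechanism forcing the idempotent part of any clean decomposition of $ur$ or $e+u$ to be conjugate to $0$; nothing in Proposition~\ref{impor lemma}(1)--(4) gives that, and invoking Lemma~\ref{corner not} is not helpful unless you first manufacture a matrix corner, which you never do. So the argument has a genuine gap at exactly the step where the content lies.

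The paper closes this gap by abandoning the elementwise attack on $U(R)$ and instead proving that $\bar R=R/J(R)$ is \emph{reduced}. If $0\neq x\in\bar R$ with $x^{2}=0$, a structure theorem of Levitzki \cite[Theorem~2.1]{lev} produces a nonzero idempotent $e\in\bar R$ with $e\bar Re\cong M_{2}(S)$ for some nontrivial $S$; this contradicts Lemma~\ref{corner not}. Once $\bar R$ is reduced it is abelian, hence uniquely clean by Corollary~\ref{ex iso}, and then Boolean by \cite[Theorem~20]{NZ}. The missing idea in your attempt is precisely this use of Levitzki's theorem to convert a hypothetical nonzero nilpotent into a matrix corner; you allude to square-zero elements and to Lemma~\ref{corner not} in your last paragraph, but never supply the bridge between them.
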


\begin{proof}  
We Show that $\overline{R}=R/J(R)$ is a reduced ring. Suppose $x^2=0$, where $0 \neq x \in \overline{R}$, then by \cite[ Theorem 2.1]{lev}, there exists $0\neq e \in Id(\overline{R})$ such that $e\overline{R}e \cong M_2(S)$, for a nontrivial ring $S$, which contradicts Lemma \ref{corner not}. Thus, $\overline{R}$ is reduced and therefore abelian. By Example \ref{ex iso}, $\overline{R}$ is uniquely clean. Hence, by \cite[Theorem 20]{NZ}, $\overline{R}$ is Boolean.
\end{proof}

\begin{corollary}\label{nil}
For a ring $R$, the following conditions are equivalent:

(1)  $R$ is   UnitUC and $J(R)$ is nil.

(2) $R$ is strongly uniquely clean and $J(R)$ is nil.

(3) $R$ is strongly $J$-clean and $J(R)$ is nil.

(4) $R$ is strongly nil-clean.
\end{corollary}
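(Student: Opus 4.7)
The plan is to run the cycle $(1) \Rightarrow (4) \Rightarrow (3) \Rightarrow (2) \Rightarrow (1)$, with Theorem \ref{bool} doing the heavy lifting in the first implication and the rest being essentially formal consequences of the Boolean-quotient structure.

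The key step $(1) \Rightarrow (4)$ I would handle as follows. Theorem \ref{bool} already gives $\bar R := R/J(R)$ Boolean, so for $a \in R$ the image $\bar a$ is idempotent. To obtain a strongly nil-clean decomposition, i.e., one in which the idempotent and nilpotent commute, I would pass to the commutative subring $\Z[a]$. There $\Z[a] \cap J(R)$ is still nil, and a standard commutative-algebra lemma provides an idempotent $e \in \text{Id}(\Z[a])$ lifting $\bar a$. Then $e$ automatically commutes with $a$, and $a - e \in J(R)$ is nilpotent.

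The remaining implications then fall out. For $(4) \Rightarrow (3)$, any strongly nil-clean ring has $R/J(R)$ Boolean hence reduced, forcing $N(R) = J(R)$, so the nilpotent in a strongly nil-clean decomposition lies in $J(R)$. For $(3) \Rightarrow (2)$ I would rewrite a strongly $J$-clean expression $a = e + j$ as $a = (1 - e) + (2e - 1 + j)$, using the earlier lemma that $2 \in J(R)$ to see that $2e - 1 + j \in -1 + J(R) \subseteq U(R)$ commutes with $1 - e$. For uniqueness, any two strong clean decompositions $a = e_i + u_i$ have $e_i \in C(a)$ (the centralizer of $a$), and within each commutative subring $\Z[a, e_i]$ the idempotent lift of $\bar e \in \bar R$ is unique, forcing $e_1 = e_2$. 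Finally $(2) \Rightarrow (1)$ is immediate from the implication diagram of the introduction, since a strong clean decomposition is in particular a clean decomposition.

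The principal obstacle is the commuting refinement in $(1) \Rightarrow (4)$: idempotents of a UnitUC ring are not central in general, so lifting $\bar a$ blindly in $R$ need not yield an idempotent commuting with $a$. The commutative-subring trick is what unlocks this, and it works precisely because $J(R)$ is nil, so that $\Z[a] \cap J(R)$ remains nil and the commutative lift is available.
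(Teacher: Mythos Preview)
Your cycle differs from the paper's (which runs $(4)\Rightarrow(3)\Rightarrow(1)\Rightarrow(4)$ and separately $(4)\Leftrightarrow(2)$, outsourcing the hard steps to \cite{chenbo} and \cite{USC}), and your self-contained $(1)\Rightarrow(4)$ via lifting in $\Z[a]$ is a clean unpacking of what the paper obtains from \cite[Theorem~5.1.1]{chenbo}. But two links in your chain have genuine gaps.

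For $(2)\Rightarrow(1)$: the reasoning ``a strong clean decomposition is in particular a clean decomposition'' points the wrong way---uniqueness among \emph{commuting} decompositions says nothing about arbitrary clean decompositions having conjugate idempotents, and the introduction's diagram is a summary of claims, not something you can cite as proof. What actually works is: uniquely strongly clean forces $R/J(R)$ Boolean (\cite[Corollary~18]{USC}), and since $R$ is clean hence exchange, Theorem~\ref{thm imp} gives UnitUC. For $(3)\Rightarrow(2)$: you invoke ``the earlier lemma that $2\in J(R)$'', but that lemma assumes UnitUC, which is not yet available at this step; it is simpler to note $(2e-1)^2=1$, so $2e-1+j\in U(R)$ regardless. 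More seriously, your uniqueness argument compares each $e_i$ to ``the lift of $\bar a$'' in $\Z[a,e_i]$, but the idempotent $f$ in any strong clean expression $a=f+u$ has $\bar f=\bar a-\bar 1$ (since $\bar u=\bar 1$ in the Boolean quotient), not $\bar f=\bar a$; and uniqueness of lifts inside the two \emph{different} subrings $\Z[a,e_1]$ and $\Z[a,e_2]$ does not by itself force $e_1=e_2$. The fix is to first produce one $e_0\in\Z[a]$ as in your $(1)\Rightarrow(4)$ step and then argue $e_i=1-e_0$ inside each commutative ring $\Z[a,e_i]$. Finally, in $(4)\Rightarrow(3)$, ``reduced, forcing $N(R)=J(R)$'' only yields $N(R)\subseteq J(R)$; you still owe $J(R)$ nil (for $j\in J(R)$, strong nil-cleanness gives $j-j^2$ nilpotent, and since $1-j$ is a commuting unit, $j$ itself is nilpotent).
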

\begin{proof}
(4) $\Rightarrow$ (3), Based on \cite[Theorem 5.1.5]{chenbo}, we have $Nil(R) = J(R)$, so nothing remains to prove.

(3) $\Rightarrow$ (1), Suppose $R$ is a strongly $J$-clean ring. Then, for every $a \in R$, there exist $e \in Id(R)$ and $j \in J(R)$ such that $a = e + j$ and $ej = je$. Thus, we can write $a = (1 - e) + (2e - 1 + j)$, which is a clean representation of $a$. Now, suppose $a = f + v$ is another clean representation of $a$. Then, we have $e - f = v + j \in U(R)$. Therefore, by Lemma \ref{first lemma}, we have $(1 - e) \sim f$.

(1) $\Rightarrow$ (4), Given Theorem \ref{bool}, for every $a \in R$, $a-a^2 \in J(R)$. Since $J(R)$ is nil, we have $a-a^2 \in \text{Nil}(R)$. Therefore, by \cite[Theorem 5.1.1]{chenbo}, $R$ is strongly nil-clean.

(4) $\Rightarrow$ (2), Given \cite[Theorem 5.1.5]{chenbo}, we have $Nil(R) = J(R)$. Also, from \cite[Corollary 5.1.2]{chenbo}, for every $a \in R$, there exists a unique idempotent $e \in R$ such that $a-e \in \text{Nil}(R) = J(R)$ with $ea = ae$. Therefore, based on \cite[Theorem 17]{USC}, $R$ is strongly uniquely clean.

(2) $\Rightarrow$ (4), Based on \cite[Corollary 18]{USC}, $R/J(R)$ is Boolean. Therefore, similar to (1) $\Rightarrow$ (4), nothing remains to prove.
\end{proof}

\begin{corollary}
Suppose $R$ is an Artinian (or finite) ring. Then, the following conditions are equivalent:

(1) $R$ is strongly nil-clean. 

(2) $R$ is strongly uniquely clean.

(3) $R$ is   UnitUC clean. 

(4) $R$ is strongly $J$-clean.

\end{corollary}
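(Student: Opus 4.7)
The plan is to reduce this corollary immediately to the preceding Corollary~\ref{nil}. That result already establishes the equivalence of (1) $R$ is UnitUC with $J(R)$ nil, (2) $R$ is strongly uniquely clean with $J(R)$ nil, (3) $R$ is strongly $J$-clean with $J(R)$ nil, and (4) $R$ is strongly nil-clean, for any ring. So the only thing left to verify is that under the Artinian (or finite) hypothesis the ``$J(R)$ is nil'' side-condition is automatic.

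First I would invoke the classical Hopkins--Levitzki theorem: in a left (or right) Artinian ring, the Jacobson radical is nilpotent, and in particular nil. A finite ring is Artinian, so the same conclusion holds there. Thus under our hypothesis the extra assumption ``$J(R)$ nil'' in parts (1)--(3) of Corollary~\ref{nil} is automatically satisfied and can be dropped.

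Next I would run the implications cyclically, each time citing Corollary~\ref{nil}. Given (1), strong nil-cleanness, Corollary~\ref{nil}(4)$\Rightarrow$(2) gives strong unique cleanness, i.e.\ (2). Given (2), Corollary~\ref{nil}(2)$\Rightarrow$(1) combined with (1)$\Rightarrow$(3) (or the direct path through strong nil-cleanness) yields UnitUC, which is (3). From (3), Corollary~\ref{nil}(1)$\Rightarrow$(3) gives strong $J$-cleanness, which is (4). Finally from (4), Corollary~\ref{nil}(3)$\Rightarrow$(4) returns us to strong nil-cleanness, closing the cycle.

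There is essentially no obstacle beyond citing the right facts: the only non-formal input is the nilpotence of $J(R)$ in the Artinian / finite case, and the heavy lifting has already been done in Corollary~\ref{nil}. The write-up will therefore be very short, amounting to one sentence on nilpotence of $J(R)$ followed by a reference to Corollary~\ref{nil}.
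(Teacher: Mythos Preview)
Your proposal is correct and matches the paper's approach: the paper gives no proof at all for this corollary, treating it as an immediate consequence of Corollary~\ref{nil} together with the standard fact that $J(R)$ is nilpotent (hence nil) in an Artinian or finite ring. Your plan to invoke Hopkins--Levitzki and then cite Corollary~\ref{nil} is exactly what is intended; the cyclic run-through you sketch is more detail than necessary, since once $J(R)$ is nil the four conditions here are literally the four conditions of Corollary~\ref{nil} (reindexed), and that corollary already gives all equivalences simultaneously.
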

\begin{theorem}\label{division}
(1) A division ring $R$ is   UnitUC  if, and only if, $R \cong \mathbb{F}_2$.

(2) A semisimple ring $R$ is  UnitUC if, and only if, $R \cong \mathbb{F}_2 \times \cdots \times \mathbb{F}_2$.

(3) A local ring $R$ is   UnitUC if, and only if, $R/J(R) \cong \mathbb{F}_2$.

(4) If a semilocal ring $R$ be   UnitUC then $R/J(R) \cong \mathbb{F}_2 \times \cdots \times \mathbb{F}_2$.
\end{theorem}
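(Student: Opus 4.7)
The plan is to prove each item by combining Theorem~\ref{bool} (that a \emph{UnitUC} ring is Boolean modulo its Jacobson radical) with the quotient/lifting machinery of Proposition~\ref{impor lemma}(5) and Lemma~\ref{ess lemma for ex}, plus the structural obstruction of Example~\ref{matrix} that rules out matrix blocks of size $\ge 2$.

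For (1), the reverse implication is immediate since $\mathbb{F}_2$ is Boolean and every Boolean ring is \emph{UnitUC}. For the forward direction, a division ring has $J(R)=0$, so Theorem~\ref{bool} forces $R$ itself to be Boolean; the only Boolean division ring is $\mathbb{F}_2$.

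For (2), given a semisimple \emph{UnitUC} ring $R$, Artin--Wedderburn gives $R\cong \prod_{i=1}^{k} M_{n_i}(D_i)$. By Lemma~\ref{prod}, each factor $M_{n_i}(D_i)$ is \emph{UnitUC}; by Example~\ref{matrix}, each $n_i=1$; and by part~(1) each $D_i\cong \mathbb{F}_2$. The converse follows from Lemma~\ref{prod} since $\mathbb{F}_2$ is \emph{UnitUC}.

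For (3), if $R$ is local \emph{UnitUC}, then $R/J(R)$ is a division ring and, by Proposition~\ref{impor lemma}(5), it is \emph{UnitUC}, so part~(1) forces $R/J(R)\cong \mathbb{F}_2$. Conversely, suppose $R$ is local with $R/J(R)\cong \mathbb{F}_2$. Every local ring is clean (if $a\notin J(R)$, write $a=0+a$; if $a\in J(R)$, write $a=1+(a-1)$). Applying Lemma~\ref{ess lemma for ex}(2) to the canonical surjection $\phi:R\to R/J(R)\cong \mathbb{F}_2$, which has kernel exactly $J(R)$, and using that $\mathbb{F}_2$ is \emph{UnitUC}, we conclude $R$ is \emph{UnitUC}. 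For (4), if $R$ is semilocal \emph{UnitUC}, then $R/J(R)$ is semisimple and, again by Proposition~\ref{impor lemma}(5), \emph{UnitUC}, so part~(2) gives the desired form.

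The only point requiring any care is the converse direction of (3): one has to invoke cleanness of local rings so that the lifting hypothesis of Lemma~\ref{ess lemma for ex}(2) is actually available; everything else is a quick assembly of results already proved in the paper.
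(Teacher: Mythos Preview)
Your proof is correct and follows essentially the same route as the paper: Theorem~\ref{bool} for (1), Artin--Wedderburn plus Lemma~\ref{prod} and Example~\ref{matrix} for (2), and passing to $R/J(R)$ via Proposition~\ref{impor lemma}(5) for the forward directions of (3) and (4). The only noteworthy difference is in the converse of (3): the paper simply invokes \cite[Theorem 15]{NZ} (a local ring with $R/J(R)\cong\mathbb{F}_2$ is uniquely clean, hence \emph{UnitUC}), whereas you stay within the paper's own machinery by noting that local rings are clean and applying Lemma~\ref{ess lemma for ex}(2) to the quotient map $R\to R/J(R)\cong\mathbb{F}_2$. Your argument is slightly more self-contained; the paper's citation is shorter but relies on an external result.
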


\begin{proof}
(1) The proof is straightforward in light of Theorem \ref{bool}.
    
(2) The proof is straightforward based on Wedderburn–Artin theorem, Lemma \ref{prod}, and Example \ref{matrix}.

(3) Assuming $R$ is local and unit uniquely clean, by Lemma \ref{impor lemma}, $R/J(R)$ is a unit uniquely clean division ring. Thus, (1) implies that $R/J(R) \cong \mathbb{F}_2$. Conversely, the result follows from \cite[Theorem 15]{NZ}.

(4) The proof is similar to (3).
\end{proof}
In \cite{cnz}, the question of when a group ring is uniquely clean is addressed and the following results are obtained: If the group ring $RG$ is uniquely clean, then $R$ is uniquely clean and $G$ is a 2-group; the converse holds if $G$ is locally finite. In the following proposition, we provide some sufficient conditions for a group ring to be a unit uniquely clean ring. The ring homomorphism $\epsilon: RG \to R$, $\sum r_gg \to \sum r_g$, is called the augmentation map, and the kernel $ker(\epsilon)$ is called the augmentation ideal of $RG$ and is denoted by $\Delta(RG)$. Note that $\Delta(RG)$ is the ideal of $RG$ generated by $\{1 - g : g \in G \}$.

\begin{theorem}\label{19} Let $R$ be a ring and $G$ be a group.\par
     (1) If the group ring $RG$ is   UnitUC, then $R$ is   UnitUC and $G$ is a 2-group.\par (2) If $R$ is a   UnitUC ring and $G$ is a locally finite 2-group, then $RG$ is a   UnitUC ring.
\end{theorem}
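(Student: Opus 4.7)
The plan is to prove the two directions separately, relying on the augmentation $\epsilon : RG \to R$, $\sum r_g g \mapsto \sum r_g$, on Theorem \ref{bool}, and on Lemma \ref{ess lemma for ex}.

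For part (1), I first show $R$ is UnitUC. Since $RG$ is clean, its quotient $R$ (via $\epsilon$) is clean. Given two clean decompositions $a = e + u = f + v$ in $R$, these are also clean decompositions inside $RG$, so by hypothesis there exists $w \in U(RG)$ with $e = w^{-1} f w$. Applying $\epsilon$ and noting $\epsilon(w) \in U(R)$ (since $\epsilon(w)\epsilon(w^{-1}) = 1$) yields $e = \epsilon(w)^{-1} f \epsilon(w)$ in $R$. To show $G$ is a $2$-group, I invoke Theorem \ref{bool}: $RG/J(RG)$ is Boolean, so $2 \in J(RG)$ and its only unit is $1$. Every $g \in G$ is a unit in $RG$, so its image in $RG/J(RG)$ equals $1$, whence $x := g - 1 \in J(RG)$. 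If some $g$ had finite order divisible by an odd prime $p$, one may replace $g$ by a power so that $\mathrm{ord}(g) = p$. Expanding $0 = g^p - 1 = (1+x)^p - 1 = x \cdot q(x)$ with $q(x) = p + \binom{p}{2} x + \cdots + x^{p-1}$, and reducing modulo $J(RG)$ using $p \equiv 1$ (since $p$ is odd and $2 \in J(RG)$), we obtain $q(x) \in 1 + J(RG) \subseteq U(RG)$, forcing $x = 0$ and $g = 1$, a contradiction. Infinite-order elements of $G$ must be excluded by a separate argument, for instance by exhibiting a structural obstruction inside the cyclic subring $R\langle g \rangle \cong R[t, t^{-1}]$.

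For part (2), the plan is to apply Lemma \ref{ess lemma for ex} via a surjection onto a Boolean (hence UnitUC) target. A standard group-ring result (using $R$ clean and $G$ a locally finite $2$-group) gives that $RG$ is clean. Consider the composition $\phi : RG \twoheadrightarrow (R/J(R))G \twoheadrightarrow R/J(R)$, where the first arrow comes from $R \twoheadrightarrow R/J(R)$ and the second is the augmentation of $(R/J(R))G$. The target $R/J(R)$ is Boolean by Theorem \ref{bool}, and every Boolean ring is UnitUC. It remains to verify $\ker \phi = J(R) G + \Delta(RG) \subseteq J(RG)$. The inclusion $J(R)G \subseteq J(RG)$ for $G$ locally finite is a standard Connell-type fact. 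Reducing modulo $J(R) G$ places us in $(R/J(R))G$, which has characteristic $2$; for any $g$ of order $2^k$ in a finite $2$-subgroup, $(1-g)^{2^k} = 1 - g^{2^k} = 0$, and local finiteness then makes $\Delta((R/J(R))G)$ a nil ideal, hence contained in its Jacobson radical. Lifting back yields $\Delta(RG) \subseteq J(RG)$. Lemma \ref{ess lemma for ex} now delivers that $RG$ is UnitUC.

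The main obstacle I anticipate is the exclusion of infinite-order elements of $G$ in part (1): the polynomial-expansion argument closes only when $g^p = 1$. Extending to the infinite-order case needs either an explicit construction of two non-equivalent clean decompositions inside a Laurent polynomial subring, or an adaptation of the techniques used for uniquely clean group rings in \cite{cnz}. A secondary technical point in part (2) is the rigorous verification of $J(R) G \subseteq J(RG)$ and the nilness of the augmentation ideal of $(R/J(R))G$ when $G$ is locally finite of exponent a power of $2$ — both are standard group-ring facts deserving explicit citation.
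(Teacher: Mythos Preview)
Your approach to part (1) has a real gap that you yourself flag: the exclusion of elements of infinite order. The polynomial identity $(1+x)^p - 1 = x\cdot q(x)$ only produces a contradiction when some power of $g$ satisfies $g^p = 1$, so nothing in your argument prevents $G$ from containing a copy of $\mathbb{Z}$. The paper sidesteps this entirely: once you have established $1-g \in J(RG)$ for every $g$ (equivalently $\Delta(RG)\subseteq J(RG)$), Connell's result \cite[Proposition 15(i)]{con} applies directly and yields that $G$ is a $p$-group with $p\in J(R)$; since $2\in J(R)$ one gets $p=2$. There is no need for the binomial expansion, and no separate treatment of torsion versus torsion-free elements. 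Your argument that $R$ is UnitUC (embed into $RG$, use the hypothesis there, then apply $\epsilon$) is fine and is in fact more explicit than the paper's one-line remark ``as $R$ is a subring of $RG$''.

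For part (2) your plan via Lemma \ref{ess lemma for ex} is correct but more circuitous than necessary. You pass to $R/J(R)$ and then must check both $J(R)G\subseteq J(RG)$ and $\Delta(RG)\subseteq J(RG)$. If instead you take $\phi=\epsilon:RG\to R$ with target $R$ (already UnitUC by hypothesis), the kernel is just $\Delta(RG)$, and the only thing to verify is $\Delta(RG)\subseteq J(RG)$, which follows from $2\in J(R)$ and $G$ a locally finite $2$-group (the paper cites \cite[Lemma~2]{zhou2}; your nilpotence argument in characteristic~$2$ also works). The paper does not invoke Lemma \ref{ess lemma for ex} explicitly but carries out the same computation by hand: apply $\epsilon$ to a clean equation, conjugate in $R$, lift the conjugating unit to $RG$, land the difference of idempotents in $\Delta(RG)\subseteq J(RG)$, and finish with Lemma \ref{first lemma}. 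The two arguments are essentially the same once unwound; yours has the advantage of reusing an already-proved lemma, the paper's of being self-contained.
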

\begin{proof}
 (1)   Let $RG$ be   UnitUC. Since $R \cong RG/\Delta(RG)$, $R$ is a clean ring, which implies that $R$ is   UnitUC, as $R$ is a subring of $RG$. Now, since $RG/J(RG)$ is Boolean by \ref{bool}, implies that $1 - g \in J(RG)$ for all $g \in G$, hence $\Delta(RG) \subseteq J(RG)$. Thus, $G$ is a $p$-group and $p \in J(R)$ for some prime $p$ by \cite[Proposition 15(i)]{con}. But since $2 \in J(R)$, it follows that $p=2$.\par
 (2) By \cite[Theorem 4]{zhou2}, $RG$ is clean. Assume $e+u=f+v$, where $e,f \in Id(RG)$ and $u,v \in U(RG)$. Then $\epsilon(e)+\epsilon(u) = \epsilon(f)+\epsilon(v)$, where $\epsilon(e),\epsilon(f) \in Id(R)$ and $\epsilon(u),\epsilon(v) \in U(R)$. Thus, there exists $d \in U(R)$ such that $\epsilon(e)=d^{-1}\epsilon(f)d$. Let $h:=d1_G\in U(RG)$. Then $\epsilon(e-h^{-1}fh)=0$, so $e-h^{-1}fh \in \Delta(RG)$. Since $2 \in J(R)$, by \cite[Lemma 2]{zhou2}, we have $e-h^{-1}fh \in J(RG)$. Also, by \cite[Proposition 9]{con}, we have $2 \in J(R) \subseteq J(RG)$. Hence, $e-(1-h^{-1}fh) \in U(RG)$, so by Lemma \ref{first lemma}, we conclude that $e \sim h^{-1}fh$.
\end{proof}

\begin{theorem}\label{thm imp}
    A ring $R$ is a   UnitUC ring if, and only if,
    
(1) $R$ is exchange.

(2) $R/J(R)$ is Boolean.
\end{theorem}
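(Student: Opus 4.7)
The forward direction will require no real work: if $R$ is UnitUC then $R$ is clean by definition, hence exchange by Nicholson's classical result, while Theorem \ref{bool} above already delivers that $R/J(R)$ is Boolean. I would dispose of this in a single sentence.

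For the converse, the plan breaks into two parts: producing a clean decomposition for each $a \in R$, and then verifying the conjugacy clause. For cleanness I would exploit the two defining features of a Boolean ring $B$, namely $U(B) = \{1\}$ (a Boolean unit $u$ satisfies $u = u^2$, hence $u = 1$) and $2 = 0$. Given $a \in R$, the class $\overline{a+1}$ is idempotent in the Boolean quotient $R/J(R)$, so since $R$ is exchange I lift it to an idempotent $e \in R$; then $a - e$ lies in $1 + J(R) \subseteq U(R)$, yielding the clean decomposition $a = e + (a-e)$.

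To verify the conjugacy clause, suppose $e + u = f + v$ with $e,f$ idempotent and $u,v$ units. Reducing modulo $J(R)$ and using $U(R/J(R)) = \{\bar 1\}$ forces $\bar u = \bar v = \bar 1$, hence $\bar e = \bar f$, i.e., $e - f \in J(R)$. The crucial observation is then that $1 - e - f$ reduces modulo $J(R)$ to $1 - 2\bar e = \bar 1$ by the Boolean relation $2 = 0$, so $1 - e - f$ belongs to $1 + J(R) \subseteq U(R)$. Applying Lemma \ref{first lemma} to the idempotents $1 - e$ and $f$, whose difference $(1-e) - f = 1 - e - f$ is a unit, I then obtain $e = 1 - (1-e) \sim f$, which is precisely what UnitUC demands.

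The only delicate step, and thus the main potential obstacle, is turning the congruence $e - f \in J(R)$ into an honest conjugation by a unit of $R$; Lemma \ref{first lemma} is tailor-made to handle exactly this, via the slightly non-obvious trick of substituting $1 - e$ for $e$. In effect the theorem just repackages Theorem \ref{bool} together with the spirit of Proposition \ref{impor lemma}(5) applied with $I = J(R)$ into a two-condition characterization parallel to Nicholson--Zhou's criterion for uniquely clean rings, with the centrality-of-idempotents hypothesis there replaced here by the exchange property.
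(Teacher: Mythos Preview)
Your proposal is correct and follows essentially the same route as the paper. Both directions hinge on the same ingredients: clean $\Rightarrow$ exchange plus Theorem \ref{bool} for the forward implication, and for the converse, lifting the (automatically idempotent) class of $a$ (the paper lifts $\bar a$, you lift $\overline{a+1}$, which amounts to the same thing since $2\in J(R)$) and then invoking Lemma \ref{first lemma}; the only cosmetic difference is that the paper fixes a canonical clean idempotent $1-e$ and shows every other clean idempotent $f$ satisfies $e-f=u-j\in U(R)$, whereas you compare two arbitrary clean decompositions directly via $1-e-f\in 1+J(R)$.
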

\begin{proof}
($\Rightarrow$) Since $R$ is a clean ring, it is sufficient to show that $R/J(R)$ is Boolean. Let's assume that $R$ is a   UnitUC ring. By Proposition \ref{impor lemma}(3), we have $R/J(R)$ is also   UnitUC. Therefore, by Lemma \ref{impor lemma}(1), we have $R/J(R)$ is a UUC ring. Using \cite[Theorem 3.1]{zhou1}, we can conclude that $R/J(R)$ is Boolean.

($\Leftarrow$) For every $a \in R$, we have $a - a^2 \in J(R)$ because $R$ is an exchange ring. There exists $e \in Id(R)$ such that $j := a - e \in J(R)$. Therefore, we can write $a = (1 - e) + (2e - 1 + j)$. Clearly, $(2e - 1 + j) \in U(R)$. Now, suppose $a = f + u$, where $f \in Id(R)$ and $u \in U(R)$. Then, we have $e - f = u - j \in U(R)$. Therefore, by Lemma \ref{first lemma}, we have $(1 - e) \sim f$.
\end{proof}

\begin{lemma} \label{corner ring}
    If $R$ is a  UnitUC ring and $e^2 = e \in R$, then $eRe$ is   UnitUC.
\end{lemma}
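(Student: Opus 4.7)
The plan is to use the characterization from Theorem \ref{thm imp}: a ring is UnitUC if and only if it is exchange and Boolean modulo its Jacobson radical. So I would verify these two conditions separately for $eRe$.

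First I would check that $eRe$ is exchange. This is a classical result of Nicholson, stating that the exchange property passes to corner rings $eRe$ for any idempotent $e$, so this step is essentially a citation rather than a genuine computation. Since the assumed UnitUC property of $R$ implies $R$ is exchange (clean rings are exchange by Nicholson), this transfers immediately.

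Next I would show that $eRe / J(eRe)$ is Boolean. The key ingredients are the standard identity $J(eRe) = eJ(R)e$, and the ring homomorphism $\varphi : eRe \longrightarrow \bar e (R/J(R)) \bar e$ defined by $ere \mapsto \bar e \bar r \bar e$, where bars denote reduction modulo $J(R)$. This map is surjective, and its kernel is precisely $eJ(R)e = J(eRe)$, giving
$$eRe / J(eRe) \;\cong\; \bar e\, (R/J(R))\, \bar e.$$
By Theorem \ref{bool}, $R/J(R)$ is Boolean, hence commutative, so $\bar e (R/J(R)) \bar e = \bar e (R/J(R))$, which is itself a Boolean ring (a corner of a Boolean ring is Boolean, since every element remains idempotent and multiplication stays commutative).

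Having verified that $eRe$ is exchange and that $eRe / J(eRe)$ is Boolean, Theorem \ref{thm imp} immediately yields that $eRe$ is UnitUC. The main potential obstacle is verifying the identity $J(eRe) = eJ(R)e$ carefully and checking that $\varphi$ is well-defined and has the claimed kernel, but both are routine facts about corner rings that I would simply invoke rather than reprove.
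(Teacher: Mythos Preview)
Your proposal is correct and follows essentially the same approach as the paper: both reduce to Theorem~\ref{thm imp}, invoke Nicholson's result that exchange passes to corners, and then verify that $eRe/J(eRe)$ is Boolean using the standard identification of $J(eRe)$ with $eJ(R)e = J(R)\cap eRe$. The only cosmetic difference is that the paper checks directly that $a-a^2\in J(R)\cap eRe=J(eRe)$ for each $a\in eRe$, whereas you phrase it via the isomorphism $eRe/J(eRe)\cong \bar e\,(R/J(R))\,\bar e$; these are the same argument.
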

\begin{proof}
By Theorem \ref{thm imp}, it suffices to show that $eRe$ is an exchange ring and $eRe/J(eRe)$ is Boolean. Since $R$ is an exchange ring, by \cite[Corollary 2.6]{nicolson}, $eRe$ is also exchange. Moreover, since $R/J(R)$ is Boolean, for every $a \in eRe$, we have $a - a^2 \in J(R) \cap 
 eRe$. However, by \cite[Theorem 21.10]{lam2001first}, we have $J(eRe) = J(R) \cap eRe$. Hence, $eRe/J(eRe)$ is Boolean.
\end{proof}

A set $\{e_{ij} : 1 \le i, j \le n\}$ of nonzero elements of $R$ is said to be a system of $n^2$ matrix units if $e_{ij}e_{st} = \delta_{js}e_{it}$, where $\delta_{jj} = 1$ and $\delta_{js} = 0$ for $j \neq s$. In this case, $e := \sum_{i=1}^{n} e_{ii}$ is an idempotent of $R$ and $eRe \cong M_n(S)$ where $S = \{r \in eRe : re_{ij} = e_{ij}r\textit{, for all } i, j = 1, 2, . . . , n\}$.

\begin{proposition} \label{dedkind finite}
    Every  UnitUC ring is Dedekind finite.
\end{proposition}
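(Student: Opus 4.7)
The plan is to show that if $ab=1$ in a UnitUC ring $R$, then $ba=1$, by combining two elementary observations about the single element $ba$.

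First, I would observe that $(ba)^2 = b(ab)a = b\cdot 1\cdot a = ba$, so whenever $ab=1$ the element $ba$ is automatically an idempotent (this is true in any ring). Next, I would invoke Theorem \ref{bool} to conclude that $R/J(R)$ is a Boolean ring, hence commutative. Consequently $\overline{ba} = \overline{a}\,\overline{b} = \overline{ab} = \overline{1}$ in $R/J(R)$, which means $1-ba \in J(R)$. Therefore $ba = 1-(1-ba) \in 1+J(R) \subseteq U(R)$, so $ba$ is a unit in $R$.

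To finish, I would note that an idempotent that is also a unit must equal $1$: multiplying $(ba)^2 = ba$ on the right by $(ba)^{-1}$ gives $ba = 1$, as desired. Since the whole argument reduces almost immediately to the Boolean-quotient result already available, there is no serious technical obstacle; the only real content is recognizing that being simultaneously idempotent and a unit is enough to force $ba = 1$. One could alternatively attempt a matrix-units approach using the Jacobson-style construction $e_{ij} = b^{i-1}(1-ba)a^{j-1}$ together with Lemma \ref{corner not}, but that route would require arguing that the diagonal entries $e_{ii}$ are nonzero, whereas the Boolean-quotient route sidesteps this difficulty entirely.
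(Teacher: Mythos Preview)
Your proof is correct and takes a genuinely different route from the paper. The paper argues by contradiction via the classical matrix-units construction: from $ab=1\neq ba$ it sets $e_{ij}=a^i(1-ba)b^j$ (the indexing there is slightly off, but the intended construction is the standard one), obtains a nonzero idempotent $e=\sum e_{ii}$ with $eRe\cong M_n(S)$, and then invokes Lemma~\ref{corner ring} together with Example~\ref{matrix} for a contradiction. Your argument bypasses all of this by going straight through Theorem~\ref{bool}: once $R/J(R)$ is Boolean (hence commutative), the idempotent $ba$ lands in $1+J(R)\subseteq U(R)$ and is therefore forced to equal $1$. This is shorter and avoids both the matrix-units bookkeeping and the corner-ring lemma; the only cost is that it leans on Theorem~\ref{bool}, which itself already encodes the ``no full matrix corner'' phenomenon via Lemma~\ref{corner not}. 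Your closing remark about the alternative route needing $e_{ii}\neq 0$ is fair, though that check is in fact routine (conjugating $e_{ii}$ by appropriate powers of $a$ and $b$ recovers $1-ba$), so the paper's approach is not really harder---just longer.
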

\begin{proof}
    If $R$ is not a Dedekind finite ring, then there exist elements $a, b \in R$ such that $ab = 1$ but $ba \neq 1$. Assuming $e_{ij} = a^i(1-ba)b^j$ and $e =\sum_{i=1}^{n}e_{ii}$, there exists a nonzero ring $S$ such that $eRe \cong M_n(S)$. However, according to Lemma \ref{corner ring}, $eRe$ is a unit uniquely clean ring, so $M_n(S)$ must also be a unit uniquely clean ring, which contradicts Example \ref{matrix}.
\end{proof}

Let $A, B$ be two rings and $M, N$ be the $(A, B)$-bimodule and $(B, A)$-bimodule, respectively. Also, we consider the bilinear maps $\phi : M\otimes_B N \to A$ and $\psi : N\otimes_AM \to B$ that apply to the following properties
$$Id_M \otimes_B \psi = \phi \otimes_A Id_M, \quad Id_N \otimes_A \phi = \psi \otimes_B Id_N .$$
For $m \in M$ and $n \in N$, we define $mn := \phi(m \otimes n)$ and $nm := \psi(n \otimes m)$.
Thus, the 4-tuple 
$R= \begin{pmatrix}
A & M \\
N & B
\end{pmatrix}$
becomes to an associative ring equipped with the obvious matrix operations, which is called a Morita context ring. Denote the two-sided
ideals $Im\phi$ and $Im\psi$ to $MN$ and $NM$, respectively, that are called the trace ideals of the Morita context.

\begin{theorem} \label{morita}
Let  $R= \begin{pmatrix}
A & M \\
N & B
\end{pmatrix}$
be a Morita context ring. Then $R$ is a   UnitUC ring if, and only if, $A$, $B$ are   UnitUC rings and $MN \subseteq J(A)$, $NM \subseteq J(B)$.
\end{theorem}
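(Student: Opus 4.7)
The plan is to reduce everything to the characterization from Theorem \ref{thm imp}: $R$ is UnitUC iff $R$ is exchange and $R/J(R)$ is Boolean. The central computation for both directions is the identification
\begin{equation*}
J(R) \;=\; \begin{pmatrix} J(A) & M \\ N & J(B) \end{pmatrix},
\end{equation*}
which holds precisely when $MN \subseteq J(A)$ and $NM \subseteq J(B)$. With this formula available, both implications of the theorem will follow from machinery already in the paper.

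For the forward direction, I would first obtain the UnitUC property of $A$ and $B$ by applying Lemma \ref{corner ring} to the canonical corner idempotents $e_1 = \begin{pmatrix} 1 & 0 \\ 0 & 0 \end{pmatrix}$ and $e_2 = \begin{pmatrix} 0 & 0 \\ 0 & 1 \end{pmatrix}$, since $e_1 R e_1 \cong A$ and $e_2 R e_2 \cong B$. To get $MN \subseteq J(A)$, I would exploit Theorem \ref{bool}: since $R/J(R)$ is Boolean, every square-zero element of $R$ lies in $J(R)$. Applying this to $\begin{pmatrix} 0 & m \\ 0 & 0 \end{pmatrix}$ and $\begin{pmatrix} 0 & 0 \\ n & 0 \end{pmatrix}$, whose product is $\begin{pmatrix} mn & 0 \\ 0 & 0 \end{pmatrix}$, yields $mn \in e_1 J(R) e_1 = J(e_1 R e_1) = J(A)$ by the identity from \cite[Theorem 21.10]{lam2001first} that is already invoked in the proof of Lemma \ref{corner ring}. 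The inclusion $NM \subseteq J(B)$ is entirely symmetric.

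For the converse, set $K = \begin{pmatrix} J(A) & M \\ N & J(B) \end{pmatrix}$. The trace-ideal hypothesis makes $K$ a two-sided ideal, and the diagonal map $R \to A/J(A) \times B/J(B)$ is a surjective ring homomorphism with kernel $K$, so $R/K \cong A/J(A) \times B/J(B)$. By Theorem \ref{bool} applied to $A$ and $B$, both quotients $A/J(A)$, $B/J(B)$ are Boolean, hence so is $R/K$, and in particular $J(R/K) = 0$, giving $J(R) \subseteq K$. For the reverse inclusion $K \subseteq J(R)$ I must show every $\begin{pmatrix} a & m \\ n & b \end{pmatrix} \in K$ is quasi-regular; a Schur-complement calculation reduces this to inverting $1 - a \in 1 + J(A) \subseteq U(A)$ and $1 - b + n(1-a)^{-1} m \in B$, where the second quantity lies in $1 + J(B)$ because $n A m \subseteq NM \subseteq J(B)$. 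Therefore $R/J(R)$ is Boolean. The exchange property of $R$ is then obtained as follows: any idempotent of $R/J(R) \cong A/J(A) \times B/J(B)$ lifts first to a pair $(e,f) \in A \times B$ (since $A$ and $B$, being UnitUC, are exchange) and hence to the diagonal idempotent $\begin{pmatrix} e & 0 \\ 0 & f \end{pmatrix}$ of $R$, while the Boolean ring $R/J(R)$ is itself exchange; so $R$ is exchange. Theorem \ref{thm imp} now delivers that $R$ is UnitUC.

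The main technical obstacle is the inclusion $K \subseteq J(R)$ in the converse. The Schur-complement inversion must be executed carefully, and it is exactly at this step that the trace-ideal hypotheses $MN \subseteq J(A)$ and $NM \subseteq J(B)$ become indispensable: without them, the off-diagonal correction term $n(1-a)^{-1} m$ need not land in $J(B)$, and the candidate inverse of a generic element of $K$ fails to exist. Every other step in the argument is a straightforward unpacking of the characterizations of UnitUC rings already established.
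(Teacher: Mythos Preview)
Your argument is correct and follows the same overall route as the paper: both directions are reduced to Theorem \ref{thm imp}, and in the forward direction both proofs get $A$, $B$ UnitUC from Lemma \ref{corner ring} and then use that $R/J(R)$ is reduced to force the off-diagonal strips into $J(R)$ (the paper squares $\begin{pmatrix}0&m\\n&0\end{pmatrix}$, you multiply the two strictly triangular pieces and pass through $e_1J(R)e_1=J(A)$, which amounts to the same thing). The difference lies only in how the converse is sourced. The paper appeals to \cite{tang}: Theorem~2.5 there gives the form of $J(R)$, Theorem~2.7(1) gives that $R$ is exchange when $A$ and $B$ are, and Lemma~3.1(1) gives $R/J(R)\cong A/J(A)\times B/J(B)$ under the trace-ideal hypothesis. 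You instead establish $J(R)=K$ by a direct quasi-regularity (Schur-complement) computation and deduce the exchange property from idempotent lifting together with Nicholson's criterion. Your version is self-contained; the paper's is shorter by outsourcing these structural facts. One cosmetic slip: the Schur complement of $1-a$ in $1-x$ is $1-b-n(1-a)^{-1}m$, not $1-b+n(1-a)^{-1}m$; since both $b$ and $n(1-a)^{-1}m$ lie in $J(B)$ this does not affect the conclusion.
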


\begin{proof}
Put $e= \begin{pmatrix}
    1_A & 0 \\ 0 & 0
\end{pmatrix},$ 
Note that $e$ and $1 - e$ are idempotents, and there are canonical ring isomorphisms $A \cong eRe$ and $B \cong (1 - e)R(1 - e)$. So, by Lemma \ref{corner ring}, $A$ and $B$ are   UnitUC rings. With \cite[Theorem 2.5]{tang}, we have $J(R) = \begin{pmatrix}
   J(A) & M_0\\ N_0 & J(B) 
\end{pmatrix}$, where $M_0 = \{x \in M : xN \subseteq J(A)\}$ and $N_0 =\{ y \in N : yM \subseteq J(B)\}$, and also from theorem \ref{thm imp}, $ R/J(R)$ is a reduced, so $\begin{pmatrix}
   0 & M\\ 0 & 0 
\end{pmatrix}$, $\begin{pmatrix}
   0 & 0 \\ N & 0 
\end{pmatrix} \in J(R)$. Therefore, $\begin{pmatrix}
   0 & M\\ N & 0 
\end{pmatrix} \in J(R)$, so for every $m \in M$ and $n \in N$, we have $\begin{pmatrix} 0 & m\\ n & 0  \end{pmatrix}\begin{pmatrix} 0 & m\\ n & 0  \end{pmatrix} \in J(R)$, which implies $MN \subseteq J(A)$ and $NM \subseteq J(B)$.

If $A$ and $B$ are exchange, then by \cite[Theorem 2.7(1)]{tang}, $R$ is also exchange. Furthermore, with \cite[Lemma 3.1(1)]{tang}, $R/J(R) \cong A/J(R) \times B/J(B)$, since $A/J(R)$ and $B/J(B)$ are Boolean, it follows that $R/J(R)$ is also Boolean. Therefore, by Theorem \ref{thm imp}, we have that $R$ is a   UnitUC ring.

\end{proof}

\begin{corollary}
Let $R$ be a ring with an idempotent $e \in R$. The following conditions are equivalent:

(1) $R$ is a   UnitUC ring.

(2) $eRe$ and $(1 - e)R(1 - e)$ are   UnitUC rings, and $eR(1 - e),(1 - e)Re \subseteq J(R)$.

\end{corollary}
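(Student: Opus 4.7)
The plan is to recognize this corollary as an instance of the Morita context Theorem \ref{morita} applied to the Peirce decomposition of $R$ with respect to $e$. Specifically, I would identify $R$ with the Morita context ring
$$\begin{pmatrix} eRe & eR(1-e) \\ (1-e)Re & (1-e)R(1-e) \end{pmatrix},$$
where $A = eRe$, $B = (1-e)R(1-e)$, $M = eR(1-e)$, $N = (1-e)Re$, and the bilinear maps $\phi, \psi$ are simply given by multiplication in $R$. This isomorphism is the standard Peirce decomposition and requires no work beyond noting it exists.

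Having made that identification, Theorem \ref{morita} directly says $R$ is UnitUC if and only if $eRe$ and $(1-e)R(1-e)$ are UnitUC and $MN \subseteq J(A)$, $NM \subseteq J(B)$. So what remains is to show that, under the standing assumption that the two corner rings are UnitUC, the conditions ``$MN \subseteq J(A)$ and $NM \subseteq J(B)$'' are equivalent to ``$eR(1-e), (1-e)Re \subseteq J(R)$''.

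For the direction $(1) \Rightarrow (2)$, I would first invoke Theorem \ref{morita} to conclude $MN \subseteq J(A)$ and $NM \subseteq J(B)$, and then strengthen this to $M, N \subseteq J(R)$ as follows. By Theorem \ref{bool}, $R/J(R)$ is Boolean and hence reduced; every element $ex(1-e) \in eR(1-e)$ squares to zero, so it is nilpotent and must lie in $J(R)$. The same applies to $(1-e)Re$. For the converse $(2) \Rightarrow (1)$, if $M, N \subseteq J(R)$, then $MN \subseteq J(R) \cap eRe$; using the standard identity $J(eRe) = eJ(R)e = J(R) \cap eRe$ for corner rings, this gives $MN \subseteq J(A)$, and symmetrically $NM \subseteq J(B)$, so Theorem \ref{morita} delivers the conclusion.

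The main obstacle I anticipate is only verifying the equivalence of the Jacobson-radical conditions in the two formulations; the forward direction uses the Boolean-quotient structure (Theorem \ref{bool}) crucially, while the backward direction relies on the elementary corner-ring identity $J(eRe) = eJ(R)e$. Everything else is just transcription of Theorem \ref{morita} into Peirce-decomposition language.
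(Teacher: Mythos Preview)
Your proposal is correct and follows essentially the same route as the paper: the Peirce decomposition identifies $R$ with a Morita context, Theorem~\ref{morita} handles $(2)\Rightarrow(1)$ (with the corner-ring identity $J(eRe)=J(R)\cap eRe$ supplying $MN\subseteq J(A)$, $NM\subseteq J(B)$), and for $(1)\Rightarrow(2)$ the Boolean quotient from Theorem~\ref{bool} forces the square-zero elements of $eR(1-e)$ and $(1-e)Re$ into $J(R)$. The only cosmetic difference is that the paper cites Lemma~\ref{corner ring} directly to get the corners UnitUC, rather than reading it off Theorem~\ref{morita}.
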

\begin{proof}
    By Lemma \ref{corner ring}, $eRe$ and $(1-e)R(1-e)$ are   UnitUC rings. Also, since $R/J(R)$ is Boolean, we have $\text{Nil}(R) \subseteq J(R)$. Therefore, $eR(1-e), (1-e)Re \subseteq \text{Nil}(R) \subseteq J(R)$.\\ Conversely, we know that $R \cong \begin{pmatrix}
        eRe & eR(1-e) \\ (1-e)Re & (1-e)R(1-e)
    \end{pmatrix}$. Hence, by Theorem \ref{morita}, nothing remains to prove.
\end{proof}

Let $R$, $S$ be two rings, and let $M$ be an $(R, S)$-bimodule, such that the operation $(rm)s = r(ms$) is valid for all $r \in R$, $m \in M$ and $s \in S$. Given such a bimodule $M$, we can set
$$
T(R, S, M) =
\begin{pmatrix}
 R& M \\
 0& S
\end{pmatrix} 
=
\left\{  
\begin{pmatrix}
 r& m \\
 0& s
\end{pmatrix} 
: r \in R, m \in M, s \in S
\right\}
$$
where it forms a ring with the usual matrix operations. The so-stated formal matrix $T(R, S, M$) is called a formal triangular matrix ring. In Theorem \ref{morita}, if we set $N =\{0\}$, then we will obtain the following corollary.

\begin{corollary} \label{formal tringular matrix}
Let $R$, $S$ be two rings and let $M$ be an $(R, S)$-bimodule. Then, the formal triangular matrix ring $T(R, S, M)$ is   UnitUC if, and only if, $R$ and $S$ are   UnitUC rings.
\end{corollary}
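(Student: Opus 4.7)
My plan is to read this off as a direct specialization of Theorem \ref{morita}. The formal triangular matrix ring
$$T(R,S,M) = \begin{pmatrix} R & M \\ 0 & S \end{pmatrix}$$
is exactly the Morita context ring associated to the data $A = R$, $B = S$, the $(R,S)$-bimodule $M$ itself, and $N = \{0\}$, together with the (unique) zero bilinear maps $\phi$ and $\psi$ (both forced to be zero because one of their tensor factors is zero, so the compatibility conditions $\mathrm{Id}_M\otimes_S\psi = \phi\otimes_R\mathrm{Id}_M$ and $\mathrm{Id}_N\otimes_R\phi = \psi\otimes_S\mathrm{Id}_N$ hold trivially). Hence $T(R,S,M)$ fits the hypothesis of Theorem \ref{morita}.

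Next, I would apply Theorem \ref{morita} directly: $T(R,S,M)$ is UnitUC if and only if $R$ and $S$ are UnitUC and $MN \subseteq J(R)$ and $NM \subseteq J(S)$. Since $N = \{0\}$, we have $MN = \{0\} \subseteq J(R)$ and $NM = \{0\} \subseteq J(S)$ automatically, so the trace ideal conditions are vacuous. Thus the characterization collapses to: $T(R,S,M)$ is UnitUC if and only if both $R$ and $S$ are UnitUC.

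There is essentially no obstacle here; the only thing to verify carefully is that the triangular matrix ring really is a Morita context in the sense defined just before Theorem \ref{morita}, which reduces to checking that the zero maps satisfy the stated compatibility axioms, and this is immediate. Therefore the corollary follows, as indicated in the hint sentence preceding the statement.
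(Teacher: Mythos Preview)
Your proposal is correct and follows exactly the same route as the paper: the corollary is obtained from Theorem~\ref{morita} by taking $N=\{0\}$, which makes both trace ideal conditions $MN\subseteq J(R)$ and $NM\subseteq J(S)$ automatic.
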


\begin{corollary}
Let $K$ be a ring and $n \ge 1$ is a natural number. Then, $T_n(K)$ is   UnitUC if, and only if,  $K$ is   UnitUC.
\end{corollary}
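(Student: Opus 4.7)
The plan is to derive this corollary from Corollary \ref{formal tringular matrix} by induction on $n$. The base case $n=1$ is immediate since $T_1(K) \cong K$, and the statement becomes a tautology.

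For the inductive step, I would realize $T_n(K)$ as a formal triangular matrix ring in the sense of Corollary \ref{formal tringular matrix}. Every element of $T_n(K)$ decomposes uniquely as a block matrix
$$\begin{pmatrix} a & \mathbf{m} \\ 0 & A' \end{pmatrix}, \qquad a \in K,\ A' \in T_{n-1}(K),\ \mathbf{m} \in K^{n-1},$$
where $\mathbf{m}$ is interpreted as a row vector. Setting $M := K^{n-1}$ with its natural $(K, T_{n-1}(K))$-bimodule structure (left scalar multiplication, right matrix multiplication), this block decomposition yields a ring isomorphism $T_n(K) \cong T(K, T_{n-1}(K), M)$, since matrix multiplication in $T_n(K)$ is exactly the matrix multiplication used to define the formal triangular matrix ring.

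Applying Corollary \ref{formal tringular matrix} to this isomorphism, $T_n(K)$ is UnitUC if, and only if, $K$ and $T_{n-1}(K)$ are both UnitUC. By the inductive hypothesis, the latter is equivalent to $K$ being UnitUC, which closes the induction and yields the claim.

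There is no substantial obstacle here: the result is essentially formal, given the heavy lifting already done in Theorem \ref{morita} and Corollary \ref{formal tringular matrix}. The only point requiring a moment of care is verifying that the bimodule action on $M = K^{n-1}$ is compatible with the usual matrix multiplication in $T_n(K)$, so that the block-decomposition map is genuinely a ring homomorphism; this is a routine check.
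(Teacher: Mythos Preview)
Your proposal is correct and follows essentially the same route as the paper: the paper's proof simply takes $R=K$, $S=T_{n-1}(K)$, $M=K^{n-1}$ in Corollary~\ref{formal tringular matrix}, leaving the induction implicit, whereas you spell it out carefully. (Note also that the paper established this same equivalence earlier in Proposition~\ref{triangular} by a different method, via the surjection $T_n(R)\to R^n$ and Lemma~\ref{ess lemma for ex}.)
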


\begin{proof}
It is sufficient to take in Corollary \ref{formal tringular matrix}, $R=K$, $S=T_{n-1}(K)$ and $M=K^{n-1}$.
\end{proof}

Given a ring $R$ and a central element $s$ of $R$, the 4-tuple 
$\begin{pmatrix}
 R& R \\
 R& R
\end{pmatrix}$
becomes a ring with addition defined componentwise and with multiplication defined by
$$
\begin{pmatrix}
 a_1& x_1 \\
 y_1& b_1
\end{pmatrix}
\begin{pmatrix}
  a_2& x_2 \\
 y_2& b_2
\end{pmatrix}=
\begin{pmatrix}
 a_1a_2 + sx_1y_2& a_1x_2 + x_1b_2 \\
 y_1a_2 + b_1y_2& sy_1x_2 + b_1b_2
\end{pmatrix}.
$$
This ring is denoted by $K_s(R)$. A Morita context
$
\begin{pmatrix}
  A& M \\
 N& B
\end{pmatrix}
$
with $A = B = M = N = R$ is called a generalized matrix ring over $R$. It was observed in \cite{krylov2008isomorphism} that a ring $S$ is a generalized matrix ring over $R$ if, and only if, $S = K_s(R)$ for some $s \in C(R)$. Here $MN = NM = sR$, so that $MN \subseteq J(A) \Longleftrightarrow  s \in J(R)$, $NM \subseteq J(B) \Longleftrightarrow  s \in  J(R)$, and $MN, NM$ are nilpotent $\Longleftrightarrow  s$ is a nilpotent. Thus, Theorem \ref{morita} has the following consequence, too.

\begin{corollary} \label{formal Ks(R)}
Let $R$ be a ring, and $s \in C(R)$. Then the formal matrix ring $K_s(R)$ is   UnitUC if, and only if, $R$ is unit uniquely clean and $s \in J(R)$
\end{corollary}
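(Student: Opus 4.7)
The plan is to reduce this to Theorem \ref{morita} by viewing $K_s(R)$ as a specific Morita context. Concretely, $K_s(R)$ is the Morita context ring $\begin{pmatrix} A & M \\ N & B \end{pmatrix}$ with $A = B = M = N = R$, where the bilinear maps $\phi \colon M \otimes_B N \to A$ and $\psi \colon N \otimes_A M \to B$ are both given by multiplication by $s$, i.e.\ $\phi(m \otimes n) = smn$ and $\psi(n \otimes m) = snm$ (up to the standard identification). One checks at the start that these pairings satisfy the associativity conditions $Id_M \otimes_B \psi = \phi \otimes_A Id_M$ and $Id_N \otimes_A \phi = \psi \otimes_B Id_N$, which holds because $s$ is central in $R$. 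Thus $K_s(R)$ genuinely fits the Morita context framework of Theorem \ref{morita}.

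Next I would identify the trace ideals. Since $\phi(R \otimes_R R) = sR$ and likewise $\psi(R \otimes_R R) = sR$, we have $MN = NM = sR$, exactly as recorded in the excerpt. Therefore the Morita conditions $MN \subseteq J(A)$ and $NM \subseteq J(B)$ both become $sR \subseteq J(R)$. Since $1 \in R$, the inclusion $sR \subseteq J(R)$ is equivalent to $s \in J(R)$.

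Now I would simply apply Theorem \ref{morita}. It states that the Morita context ring is UnitUC if and only if both corner rings $A$ and $B$ are UnitUC and the trace ideals sit inside the respective Jacobson radicals. In the case at hand $A = B = R$, and both trace-ideal conditions collapse to $s \in J(R)$, which finishes the proof.

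I do not anticipate any real obstacle here: the entire content is a translation between the two descriptions of $K_s(R)$ and an appeal to Theorem \ref{morita}. The one small point that requires a moment's care is verifying that the multiplication in $K_s(R)$ displayed in the excerpt really does correspond to the abstract Morita multiplication with $\phi, \psi$ given by scaling by $s$; this is where centrality of $s$ is used to ensure the bimodule structures and the associativity axioms hold. Once that identification is in place, the corollary follows immediately.
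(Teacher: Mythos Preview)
Your proposal is correct and follows exactly the paper's approach: the paper also identifies $K_s(R)$ as the Morita context with $A=B=M=N=R$ and trace ideals $MN=NM=sR$, then invokes Theorem \ref{morita} directly. The only difference is that you spell out the verification of the Morita context axioms and the equivalence $sR\subseteq J(R)\Leftrightarrow s\in J(R)$ in slightly more detail than the paper does.
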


\begin{corollary}
Let $R$ be a ring, then the formal matrix ring $K_2(R)$ is   UnitUC if, and only if, $R$ is   UnitUC.
\end{corollary}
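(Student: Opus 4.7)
The plan is to deduce this as a direct specialization of the previous corollary, Corollary~\ref{formal Ks(R)}, applied with the central element $s = 2 \in C(R)$. Since the integer $2 = 2 \cdot 1_R$ is automatically central in any ring, the hypothesis $s \in C(R)$ is satisfied for free, so $K_2(R)$ falls within the scope of the generalized matrix ring construction $K_s(R)$.

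For the forward implication, I would simply observe that if $K_2(R)$ is UnitUC, then Corollary~\ref{formal Ks(R)} gives immediately that $R$ is UnitUC (the conclusion that $2 \in J(R)$ being a bonus byproduct, not needed here).

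For the reverse implication, assume $R$ is UnitUC. Then by the earlier lemma establishing that $2 \in J(R)$ whenever $R$ is UnitUC, both hypotheses of Corollary~\ref{formal Ks(R)} (with $s=2$) are met: namely, $R$ is UnitUC and $s = 2 \in J(R)$. Therefore $K_2(R)$ is UnitUC.

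There is no real obstacle here; the only point to make carefully is that Corollary~\ref{formal Ks(R)} requires $s \in J(R)$, and this is not an extra assumption but is forced automatically by the UnitUC hypothesis on $R$ via the earlier lemma. So the entire proof is essentially two lines invoking Corollary~\ref{formal Ks(R)} together with the fact that $2 \in J(R)$ in any UnitUC ring.
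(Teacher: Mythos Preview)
Your proposal is correct and matches exactly the approach the paper intends: the corollary is stated without proof immediately after Corollary~\ref{formal Ks(R)}, and the only point to supply is that $2\in J(R)$ is automatic for a UnitUC ring by the earlier lemma, so the specialization $s=2$ goes through in both directions.
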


Following \cite{zhou3}, for $n \ge 2$ and for $s \in C(R)$, the $n \times n$ formal matrix ring over $R$ defined by $s$, denoted as $M_n(R; s)$, is the set of all $n \times n$ matrices over $R$ with usual addition of matrices and with multiplication defined below: for $(a_{ij})$ and $(b_{ij})$ in $M_n(R;s)$,
$$(a_{ij} )(b_{ij} )=(c_{ij} ), \textit{ where } c_{ij} = \sum_{k=1}^{n}s^{\delta_{ikj}} a_{ik}b_{kj}.$$
Here $\delta_{ijk} = 1 + \delta_{ik} - \delta_{ij} - \delta_{jk}$ with $\delta_{ik}$, $\delta_{ij}$, $\delta_{jk}$ the Kronecker delta symbols. Note that a proper matrix ring is never   UnitUC (see Lemma \ref{matrix}).

\begin{theorem}
Let $R$ be a ring with $s \in C(R)$ and let $n \ge 2$. Then $M_n(R; s)$ is   UnitUC if, and only if, $R$ is   UnitUC and $s \in J(R)$.    
\end{theorem}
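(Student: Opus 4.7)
The plan is to organize the argument around Theorem \ref{thm imp}, that UnitUC is equivalent to being exchange with Boolean quotient modulo the Jacobson radical, and to use Lemma \ref{corner ring} to bring the general case down to $n=2$, which is already covered by Corollary \ref{formal Ks(R)}. The structural ingredients that control everything are the matrix-unit multiplication $e_{ij} e_{kl} = \delta_{jk}\, s^{\delta_{ijl}} e_{il}$ in $M_n(R;s)$ and the centrality of $s$.

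For the forward direction, assume $M_n(R;s)$ is UnitUC. The corner $e_{11} M_n(R;s) e_{11} \cong R$ is UnitUC by Lemma \ref{corner ring}, so $R$ is UnitUC. Taking $f := e_{11}+e_{22}$, a direct computation of the four relevant matrix-unit products with indices in $\{1,2\}$, in which the key exponents are $\delta_{121}=\delta_{212}=2$, shows $f M_n(R;s) f \cong K_{s^2}(R)$. Applying Lemma \ref{corner ring} and then Corollary \ref{formal Ks(R)} forces $s^2 \in J(R)$, and centrality of $s$ promotes this to $s \in J(R)$ via the identity $(1-rs)(1+rs) = 1 - r^2 s^2 \in U(R)$ for every $r \in R$.

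For the converse, assume $R$ is UnitUC and $s \in J(R)$. Entrywise reduction modulo $J(R)$ yields a ring surjection $\pi : M_n(R;s) \longrightarrow M_n(R/J(R);0)$ whose kernel is $K = M_n(J(R);s)$. A block inversion argument, using centrality of $s$ and $s \in J(R)$ to absorb each $s$-twist, shows that $1-A$ is a unit for every $A \in K$, so $K \subseteq J(M_n(R;s))$. In the image ring, a direct check of the multiplication when $s=0$ shows the off-diagonal matrices form a square-zero two-sided ideal, and the diagonal quotient is $(R/J(R))^n$, which is Boolean by Theorem \ref{bool}. Hence $M_n(R;s)/J(M_n(R;s)) \cong (R/J(R))^n$ is Boolean. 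The exchange property of $M_n(R;s)$ then follows from the exchange property of the Boolean quotient together with liftability of diagonal idempotents (possible because $R$ is clean, in fact uniquely clean), and Theorem \ref{thm imp} concludes the proof.

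The principal obstacle is the inclusion $K \subseteq J(M_n(R;s))$, i.e.\ proving quasi-regularity of every matrix with all entries in $J(R)$. The $s$-twisted multiplication produces new diagonal contributions from off-diagonal products, and each such contribution has to be absorbed into $J(R)$ during the inversion of $1-A$; this is precisely where the hypotheses that $s$ is central and $s \in J(R)$ become indispensable. The cleanest way to finesse the bookkeeping is to cite the explicit description of $J(M_n(R;s))$ from \cite{zhou3}, valid under the same hypothesis.
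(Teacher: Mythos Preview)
Your argument is correct but follows a different route from the paper. The paper proceeds by induction on $n$: it writes $M_n(R;s)$ as a Morita context $\begin{pmatrix} M_{n-1}(R;s) & M \\ N & R \end{pmatrix}$, computes the trace ideals explicitly (getting $NM=s^2R$ and $MN\subseteq sM_{n-1}(R;s)$), and then invokes Theorem~\ref{morita} in both directions; the passage from $s^2\in J(R)$ to $s\in J(R)$ is handled via the Boolean quotient $R/J(R)$, in which $\bar s^2=0$ forces $\bar s=0$. By contrast, you avoid induction entirely: for the forward direction you extract $R$ and $K_{s^2}(R)$ as corners via Lemma~\ref{corner ring}, and your elementary quasi-regularity identity $(1-rs)(1+rs)=1-r^2s^2$ shows $s\in J(R)$ directly from centrality and $s^2\in J(R)$, without appealing to the Boolean structure; for the converse you bypass the Morita machinery altogether by computing $M_n(R;s)/J(M_n(R;s))\cong (R/J(R))^n$ explicitly and applying Theorem~\ref{thm imp}. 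Both approaches ultimately lean on \cite{zhou3} for the Jacobson radical of $M_n(R;s)$. Your approach is arguably more transparent because it handles all $n\ge 2$ uniformly and exposes the quotient structure directly; the paper's approach is more in keeping with the Morita-context theme developed just before. One small correction: your parenthetical ``in fact uniquely clean'' is not warranted, since UnitUC rings need not be uniquely clean unless abelian (Corollary~\ref{ex iso}); however, only cleanness is used, so the proof stands.
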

\begin{proof}
We prove the claim by induction on $n$. For $n=2$, we have $M_2(R; s)=k_{s^2}(R)$, now the claim is true by Corollary \ref{formal Ks(R)}. Assume $n>2$ and the claim holds for $M_{n-1}(R,s)$. Let $A=M_{n-1}(R,s)$. Then 
$M_{n-1}(R,s)=\begin{pmatrix}A & M \\ N & R\end{pmatrix}$
is a Morita context, where 
$$M= \begin{pmatrix} M_{1n} \\  M_{2n} \\ \vdots \\  M_{n-1,n}\end{pmatrix},\quad
N= \begin{pmatrix} M_{n1} & M_{n2}  \cdots  & M_{n,n-1}\end{pmatrix}, \quad
M_{n,i}=M_{i,n}=R,$$
 $\;1 \le i \le n-1.$

Moreover, for every $x \in M$, $y\in N$ we have
\begin{equation}
   xy= \begin{pmatrix}
s^2x_{1n}y_{n1} & sx_{1n}y_{n2} & \cdots & sx_{1n}y_{n,n-1} \\
sx_{2n}y_{n1} & s^2x_{2n}y_{n2} & \cdots & sx_{2n}y_{n,n-1} \\
\vdots & \vdots & \ddots & \vdots\\
sx_{n-1,n}y_{n1} & sx_{n-1,n}y_{n2} & \cdots & s^2x_{n-1,n}y_{n,n-1}
\end{pmatrix}\in A 
\end{equation}
\begin{equation}
    yx= \sum_{k=1}^{n-1}s^2y_{n,k}x_{k,n} \in R.
\end{equation}

If $M_n(R; s)$ is   UnitUC, then, by Theorem \ref{morita}, $R$ is   UnitUC and $NM \subseteq J(R)$. Thus, by (2), $s^2R = NM \subseteq J(R)$, so $s^2 \in J(R)$ As $R/J(R)$ is Boolean, $s \in J(R)$.

Conversely, suppose that $R$ is   UnitUC and $s \in J(R)$. Then $NM = s^2R \subseteq J(R)$. But, by (1) and \cite[Theorem 11]{zhou3}, $MN \subseteq sA \subseteq J(R)$. Hence $M_n(R; s)$ is   UnitUC by Theorem \ref{morita}.
\end{proof}

\section{ Extensions of unit uniquely clean rings }

We start the trivial extension of a   UnitUC ring. Let $R$ be a ring and $M$ a bi-module over $R$. The {\it trivial extension} of $R$ and $M$ is defined as
$$T(R, M) = \{(r, m) : r \in R \text{ and } m \in M\},$$
with addition defined componentwise and multiplication defined by
$$(r, m)(s, n) = (rs, rn + ms).$$
Notice that the trivial extension ${\rm T}(R, M)$ is isomorphic to the subring
$$\left\{ \begin{pmatrix} r & m \\ 0 & r \end{pmatrix} : r \in R \text{ and } m \in M \right\}$$
consisting of the formal $2 \times 2$ matrix rings $\begin{pmatrix} R & M \\ 0 & R \end{pmatrix}$ and, in particular, the isomorphism ${\rm T}(R, R) \cong R[x]/\left\langle x^2 \right\rangle$ is fulfilled. We also note that the Jacobson radical of the trivial extension ${\rm T}(R, M)$ is $$J({\rm T}(R, M)) = {\rm T}(J(R), M).$$

A Morita context is referred to as {\it trivial} if the context products are trivial, meaning that $MN = \{0\}$ and $NM = \{0\}$ (see, for instance, \cite[p. 1993]{mar}). In this case, we have the isomorphism
$$\begin{pmatrix} A & M \\ N & B \end{pmatrix} \cong {\rm T}(A \times B, M\oplus N),$$
where $\begin{pmatrix} A & M \\ N & B \end{pmatrix}$ represents a trivial Morita context, as stated in \cite{kos1}.

A good information gives also the following necessary and sufficient condition.

\begin{proposition}\label{pro3.41}
Let $R$ be a ring and $M$ a bi-module over $R$. Then, ${\rm T}(R, M)$ is   UnitUC if, and only if, $R$ is  UnitUC.
\end{proposition}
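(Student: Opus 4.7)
The plan is to reduce the statement to Lemma~\ref{ess lemma for ex} applied to the natural projection
$$\phi\colon T(R,M)\longrightarrow R,\qquad (r,m)\longmapsto r,$$
which is a surjective ring homomorphism with kernel $\{0\}\times M$. The first preparatory step is to check that this kernel lies in $J(T(R,M))$. Since $(0,m)(0,m') = (0,\,0\cdot m' + m\cdot 0) = (0,0)$, the ideal $\{0\}\times M$ is square-zero, hence nil, and therefore contained in the Jacobson radical of $T(R,M)$. (One can also invoke the identity $J(T(R,M)) = T(J(R),M)$ stated in the excerpt.)

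With this in hand, the forward implication is immediate: if $T(R,M)$ is UnitUC, then Lemma~\ref{ess lemma for ex}(1) applied to $\phi$ gives that $R$ is UnitUC.

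For the converse, I would apply Lemma~\ref{ess lemma for ex}(2), which requires verifying that $T(R,M)$ is clean whenever $R$ is UnitUC (and hence clean). Given $(r,m)\in T(R,M)$, take a clean decomposition $r=e+u$ in $R$ with $e^2=e$ and $u\in U(R)$, and write
$$(r,m) = (e,0) + (u,m).$$
Clearly $(e,0)^2=(e,0)$, and a routine check confirms that $(u,m)$ is a unit with inverse $(u^{-1},\,-u^{-1}mu^{-1})$. Hence $T(R,M)$ is clean, and Lemma~\ref{ess lemma for ex}(2) yields that $T(R,M)$ is UnitUC.

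I do not foresee any genuine obstacle, since Lemma~\ref{ess lemma for ex} was set up precisely for epimorphisms with kernel inside the Jacobson radical, and the trivial extension behaves optimally in that its kernel under $\phi$ is even square-zero. The only point requiring a moment's verification is the unit status of $(u,m)$, which is a one-line computation. The main conceptual content of the proof is therefore just the identification of the correct epimorphism and the transparent description of idempotents and units in $T(R,M)$.
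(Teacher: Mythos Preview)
Your proof is correct and follows exactly the paper's approach: the paper also defines $\phi\colon T(R,M)\to R$ by $\phi((r,m))=r$, notes that $\mathrm{Ker}(\phi)\subseteq J(T(R,M))$, and invokes Lemma~\ref{ess lemma for ex} together with the observation that $T(R,M)$ is clean when $R$ is. You simply supply more detail (the explicit inverse of $(u,m)$ and the square-zero argument for the kernel) than the paper does.
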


\begin{proof}
It is sufficient to define $\phi: T(R,M) \longrightarrow R$, with $\phi((r,m))=r$. Clearly, $\phi$ is an epimorphism with $\text{Ker}(\phi) \subseteq J(T(R,M))$. Therefore, with Lemma \ref{ess lemma for ex}, nothing else remains to be proven. Note that if $R$ is a clean ring, then $T(R,M)$ is also clean.
\end{proof}

The next criterion is also worthy of documentation.

\begin{corollary}\label{cor3.38}
Let $R=A~M \choose N~B$ be a trivial Morita context. Then, $R$ is   UnitUC if, and only if, both $A$ and $B$ are   UnitUC.
\end{corollary}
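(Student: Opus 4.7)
The plan is to deduce this corollary by combining three facts already available in the excerpt: the structural isomorphism for a trivial Morita context stated just before the corollary, Proposition~\ref{pro3.41} on trivial extensions, and Lemma~\ref{prod} on direct products. In essence, the argument is a one-line chain of equivalences; no new machinery is needed.

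First, I would invoke the identification
\[
\begin{pmatrix} A & M \\ N & B \end{pmatrix} \;\cong\; T\bigl(A \times B,\; M \oplus N\bigr),
\]
which holds precisely because the context is trivial (i.e.\ $MN = 0$ and $NM = 0$), so that the matrix multiplication on the left degenerates into the trivial-extension multiplication on the right, with $A \times B$ acting on $M \oplus N$ in the obvious way. This reduces the statement about a trivial Morita context to a statement about a trivial extension over a product ring.

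Next, I would apply Proposition~\ref{pro3.41} with base ring $A \times B$ and bimodule $M \oplus N$: the trivial extension $T(A \times B,\, M \oplus N)$ is UnitUC if and only if $A \times B$ is UnitUC. Then Lemma~\ref{prod} gives that $A \times B$ is UnitUC if and only if each of $A$ and $B$ is UnitUC. Chaining these two biconditionals with the isomorphism of the previous paragraph yields the desired equivalence.

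There is no real obstacle here; the only thing to be careful about is that the isomorphism genuinely respects the ring structure (addition and multiplication), which uses triviality of the context in an essential way. Since this isomorphism is the content of the displayed formula preceding the corollary (and is cited from \cite{kos1}), the proof reduces to simply assembling the three ingredients, and I would present it in exactly that order: isomorphism, then Proposition~\ref{pro3.41}, then Lemma~\ref{prod}.
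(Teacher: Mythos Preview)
Your proposal is correct and matches the paper's own proof exactly: the paper simply says the result is straightforward from Proposition~\ref{pro3.41} and Lemma~\ref{prod}, with the trivial-Morita-context isomorphism $R\cong T(A\times B,\,M\oplus N)$ (displayed just before the corollary) supplying the link. There is nothing to add or change.
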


\begin{proof}
It is straightforward bearing in mind Propositions \ref{pro3.41} and Lemma \ref{prod}.
\end{proof}

Likewise, we can derive the following:

\begin{corollary}\label{cor3.43}
Let $R$ be a ring and $M$ a bi-module over $R$. Then, the following four statements are equivalent:
\begin{enumerate}
\item
$R$ is a   UnitUC ring.
\item
${\rm T}(R, M)$ is a   UnitUC ring.
\item
${\rm T}(R, R)$ is a   UnitUC ring.
\item
$\dfrac{R[x]}{\left\langle x^2 \right\rangle}$ is a   UnitUC ring.
\end{enumerate}
\end{corollary}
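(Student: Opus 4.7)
The plan is to reduce everything to Proposition~\ref{pro3.41}, which already establishes that $\mathrm{T}(R,M)$ is UnitUC if and only if $R$ is UnitUC for an arbitrary bimodule $M$. Since this equivalence holds for \emph{every} choice of $M$, the implications (1)$\Leftrightarrow$(2) and (1)$\Leftrightarrow$(3) both fall out immediately: take $M$ to be the given bimodule for the first, and take $M=R$ (viewed as the regular bimodule over itself) for the second.

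Next, I would handle (3)$\Leftrightarrow$(4) by invoking the isomorphism $\mathrm{T}(R,R)\cong R[x]/\langle x^2\rangle$ recorded in the paragraph preceding Proposition~\ref{pro3.41}. Being UnitUC is clearly a ring-theoretic property preserved by isomorphism (all of its ingredients---clean decompositions, idempotents, units, the equivalence $\sim$ via conjugation by units---are purely ring-theoretic), so (3) and (4) are equivalent.

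Combining these, the chain (1)$\Leftrightarrow$(2), (1)$\Leftrightarrow$(3), (3)$\Leftrightarrow$(4) closes up all four conditions. There is no real obstacle here: the substantive work was already carried out in Proposition~\ref{pro3.41} (via the epimorphism $\phi\colon\mathrm{T}(R,M)\to R$, $(r,m)\mapsto r$, whose kernel lies in $J(\mathrm{T}(R,M))=\mathrm{T}(J(R),M)$, together with Lemma~\ref{ess lemma for ex}). The only thing worth being slightly careful about is noting explicitly that the isomorphism in (3)$\Leftrightarrow$(4) transports the UnitUC property, and that in the application of Proposition~\ref{pro3.41} the bimodule $M$ is completely arbitrary so the proposition can be reused for both $M$ and the specific choice $M=R$.
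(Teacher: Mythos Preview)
Your proposal is correct and follows exactly the approach intended by the paper: the paper does not even write out a proof for this corollary, merely prefacing it with ``Likewise, we can derive the following,'' so the content is precisely the application of Proposition~\ref{pro3.41} (twice, for general $M$ and for $M=R$) together with the isomorphism ${\rm T}(R,R)\cong R[x]/\langle x^2\rangle$ recorded in the discussion before that proposition.
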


Assume that 
$$S_n(R) =\left\{(a_{ij}) \in T_n(R) : a_{11}= \cdots = a_{nn}\right\} ,$$
 $$S^{\prime} = \left \{\begin{pmatrix}
    0 & a\\ 0 & b
\end{pmatrix}
\in T_2(R) : a, b \in R \right \}$$, 
 and   $$S'' = \left \{
\begin{pmatrix}
   0 & 0 & a\\ 0 & 0 & b\\ 0 & 0 & c
\end{pmatrix}
\in T_3(R) : a, b, c \in R \right \}.$$ Then, clearly we have $S_2(R) = T(R,R), S_3(R) \cong T(T(R,R),S^{\prime})$, and  $S_4(R) \cong T(S_3(R),S'')$.\\ Therefore, by Propositions \ref{pro3.41}, we have that $R$ is  UnitUC if and only if, $R_3$ is  UnitUC if, and only if, $R_4$ is   UnitUC.

\begin{proposition}
    Let $R$ be a ring. Then $S_n(R)$ is   UnitUC if, and only if, $R$ is   UnitUC ring.
\end{proposition}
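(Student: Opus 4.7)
The plan is to extend the authors' explicit decompositions $S_2(R)=T(R,R)$, $S_3(R) \cong T(T(R,R), S')$ and $S_4(R) \cong T(S_3(R), S'')$ to a uniform inductive identification
$$S_n(R) \;\cong\; T\bigl(S_{n-1}(R),\, R^{n-1}\bigr)$$
valid for every $n \ge 2$, and then to invoke Proposition \ref{pro3.41} repeatedly.

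First I would equip $R^{n-1}$ (viewed as column vectors) with an $S_{n-1}(R)$-bimodule structure as follows. Let $d\colon S_{n-1}(R) \to R$ be the ring homomorphism sending a matrix to its common diagonal entry. Define the left action by ordinary matrix-vector multiplication $M \cdot v := Mv$, and the right action by $v \cdot M := v\, d(M)$. Because $d$ is multiplicative and the right action factors through $R$, the bimodule axioms $v \cdot (M_1 M_2) = (v \cdot M_1) \cdot M_2$ and $(Mv) \cdot N = M \cdot (v \cdot N)$ follow immediately.

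Next I would define $\Phi\colon T(S_{n-1}(R), R^{n-1}) \to S_n(R)$ by
$$\Phi(M, v) = \begin{pmatrix} M & v \\ 0 & d(M) \end{pmatrix}.$$
Bijectivity is obvious since every element of $S_n(R)$ admits a unique block decomposition of this form, the bottom-right entry being forced to equal $d(M)$ by the common-diagonal condition. To see that $\Phi$ is a ring homomorphism, one expands the block product
$$\begin{pmatrix} M_1 & v_1 \\ 0 & d(M_1) \end{pmatrix}\begin{pmatrix} M_2 & v_2 \\ 0 & d(M_2) \end{pmatrix} = \begin{pmatrix} M_1M_2 & M_1 v_2 + v_1\, d(M_2) \\ 0 & d(M_1)\,d(M_2) \end{pmatrix},$$
and compares with $\Phi$ applied to the trivial-extension product $(M_1 M_2,\, M_1 v_2 + v_1 d(M_2))$; these match because $d(M_1 M_2) = d(M_1)\,d(M_2)$.

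With the isomorphism in place, Proposition \ref{pro3.41} immediately yields that $S_n(R)$ is UnitUC if and only if $S_{n-1}(R)$ is UnitUC, and descending by induction reduces to the base case $S_2(R) = T(R,R)$, which by a final appeal to Proposition \ref{pro3.41} is UnitUC if and only if $R$ is. The only point requiring real care is verifying the bimodule axioms for $R^{n-1}$ with the non-standard right action through $d$; everything else amounts to routine block-matrix algebra.
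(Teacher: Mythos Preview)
Your proof is correct and coincides with the paper's Method~2, which establishes $S_n(R)\cong T(S_{n-1}(R),L_{n-1}(R))$ where $L_{n-1}(R)$ (matrices in $T_{n-1}(R)$ with only the last column nonzero) is precisely your column-vector bimodule $R^{n-1}$ with the actions you describe. The paper also records a shorter Method~1: the diagonal map $\phi\colon S_n(R)\to R$, $(a_{ij})\mapsto a_{11}$, is a surjective ring homomorphism with $\ker\phi\subseteq J(S_n(R))$, so a single application of Lemma~\ref{ess lemma for ex} settles both directions at once without any induction.
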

\begin{proof}
Method 1) Assume that $\phi: S_n(R)\longrightarrow  R$ with $\phi((a_{ij}))=a_{11}$. Then, $\phi$ is a surjective homomorphism with $\text{Ker}(\phi) \subseteq J(S_n(R))$. Therefore, by Lemma \ref{ess lemma for ex}, the proof is complete.

Method 2) Let $L_n(R) = \left\{
\begin{pmatrix}
    0 & \cdots & 0 & a_1 \\ 0 & \cdots & 0 & a_2 \\ \vdots & \ddots & \vdots & \vdots \\ 0 & \cdots & 0 & a_n  
\end{pmatrix}
\in T_n(R) : a_i \in R \right \}$. Then, with a simple calculation, we can show that $S_n(R) \cong T(S_{n-1}(R), L_{n-1}(R))$. Therefore, by induction on $n$ and Proposition \ref{pro3.41}, the proof is straightforward.
\end{proof}

Let $R$ be a ring. We introduced  the following rings as follows,  which will appear in: \par
P. Danchev, A. Javan, O. Hasanzadeh, A. Moussavi,  Rings with $u-1$ quasinilpotent for each unit $u$, $J.\, Alg.\, Appl.$ (2024).

\begin{align*} 
&A_{n,m}(R) =R[x,y | x^n=xy=y^m=0], \\
&B_{n,m}(R) =R\left\langle x,y | x^n=xy=y^m=0  \right\rangle, \\
&C_{n}(R) =R \langle x,y | x^2=\underbrace{xyxyx...}_{\text{$n-1$ words}}=y^2=0 \rangle.
\end{align*}

Wang in \cite{wang} introduced the matrix ring $S_{n,m}(R)$. Suppose $R$ is a ring, then the matrix ring $S_{n,m}(R)$ can be represented as

$$\left\{ \begin{pmatrix}
   a & b_1 & \cdots & b_{n-1} & c_{1n} & \cdots & c_{1 n+m-1}\\
   \vdots  & \ddots & \ddots & \vdots & \vdots & \ddots & \vdots \\
   0 & \cdots & a & b_1 & c_{n-1,n} & \cdots & c_{n-1,n+m-1} \\
   0 & \cdots & 0 & a & d_1 & \cdots & d_{m-1} \\
   \vdots  & \ddots & \ddots & \vdots & \vdots & \ddots & \vdots \\
   0 & \cdots & 0 & 0  & \cdots & a & d_1 \\
   0 & \cdots & 0 & 0  & \cdots & 0 & a
\end{pmatrix}  : a, b_i, d_j,c_{i,j} \in R \right\}.$$

Also, let $T_{n,m}(R)$ be 

$$\left\{ \left(\begin{array}{@{}c|c@{}}
  \begin{matrix}
  a & b_1 & b_2 & \cdots & b_{n-1} \\
  0 & a & b_1 & \cdots & b_{n-2} \\
  0 & 0 & a & \cdots & b_{n-3} \\
  \vdots & \vdots & \vdots & \ddots & \vdots \\
  0 & 0 & 0 & \cdots & a
  \end{matrix}
  & \bigzero \\
\hline
  \bigzero &
  \begin{matrix}
  a & c_1 & c_2 & \cdots & c_{m-1} \\
  0 & a & c_1 & \cdots & c_{m-2} \\
  0 & 0 & a & \cdots & c_{m-3} \\
  \vdots & \vdots & \vdots & \ddots & \vdots \\
  0 & 0 & 0 & \cdots & a
  \end{matrix}
\end{array}\right)\in T_{n+m}(R) : a, b_i,c_j \in R \right\} $$

and 
$$U_{n}(R)=\left\{ \begin{pmatrix}
   a & b_1 & b_2 & b_3 & b_4 & \cdots & b_{n-1} \\ 
   0 & a & c_1 & c_2 & c_3 & \cdots & c_{n-2} \\ 
   0 & 0 & a & b_1 & b_2 & \cdots & b_{n-3} \\
   0 & 0 & 0 & a & c_1 & \cdots & c_{n-4} \\ 
   \vdots & \vdots & \vdots & \vdots &  &  & \vdots \\
   0 &0 & 0 & 0 & 0 & \cdots & a
\end{pmatrix}\in T_{n}(R) :  a, b_i, c_j \in R \right\}.$$

We show in the following lemma the relations between these rings.
 
\begin{lemma} \label{section 3 lemma}
    Let $R$ be a ring and $m, n \in \mathbb{N}$. Then
    
(1) $A_{n,m}(R) \cong T_{n,m}(R)$.

(2) $B_{n,m}(R) \cong S_{n,m}(R)$.

(3) $C_{n}(R) \cong U_{n}(R)$.
\end{lemma}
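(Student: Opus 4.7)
The plan is to handle all three parts with the same template: use the universal property of the presented algebra on the left to define a ring homomorphism to the matrix ring on the right by specifying the images of the generators $x$ and $y$, and then upgrade this map to an isomorphism by matching $R$-module bases on the two sides.

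First I would read off a convenient $R$-basis of each presented ring from its defining relations. In $A_{n,m}(R)$ the variables commute and $xy=0$, so every mixed monomial vanishes and the set $\{1,x,\ldots,x^{n-1},y,\ldots,y^{m-1}\}$ is a basis of size $n+m-1$. In $B_{n,m}(R)$ the variables do not commute, but $xy=0$ still kills every word containing the substring $xy$, so the surviving words all have the form $y^j x^i$, giving a basis $\{y^j x^i:0\le i<n,\ 0\le j<m\}$ of size $nm$. In $C_n(R)$ the relations $x^2=y^2=0$ restrict nonzero words to alternating ones, and the vanishing of the length-$(n-1)$ alternating word starting with $x$ (together with its left and right multiples) then leaves the basis $\{1\}\cup\{w_k^x:1\le k\le n-2\}\cup\{w_k^y:1\le k\le n-1\}$ of size $2n-2$, where $w_k^x$ and $w_k^y$ denote the alternating words of length $k$ starting with $x$ and $y$ respectively. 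In parallel I would count the free parameters in $T_{n,m}(R)$, $S_{n,m}(R)$ and $U_n(R)$ and verify that they are free $R$-modules of rank $n+m-1$, $nm$ and $2n-2$ respectively.

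Next, for each isomorphism I would exhibit explicit matrices $X,Y$ in the target that satisfy the defining relations of the source, so that the universal property yields a well-defined ring homomorphism $x\mapsto X$, $y\mapsto Y$. For (1), take $X$ to be the upper-block Toeplitz nilpotent (the matrix with $b_1=1$ and all other entries zero) and $Y$ the corresponding lower-block nilpotent; since the two blocks are disjoint we have $XY=YX=0$, and the exponents of nilpotency are exactly $n$ and $m$. For (2), take $X,Y$ to be the standard Toeplitz nilpotents of the $n\times n$ and $m\times m$ diagonal blocks of $S_{n,m}(R)$, arranged so that $XY=0$ while $YX$ is nonzero and produces the entries of the rectangular $c$-block. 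For (3), let $X$ be the matrix with $b_1=1$ and $Y$ the matrix with $c_1=1$; both square to zero by the alternating superdiagonal pattern of $U_n(R)$, and the length-$(n-1)$ alternating product $XYXY\cdots$ shifts the single nonzero entry past the top-right corner and so vanishes.

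Finally I would check that each of these homomorphisms sends the chosen basis of the source bijectively onto the natural $R$-basis of the target: $x^i\leftrightarrow b_i$ and $y^j\leftrightarrow c_j$ for (1); $y^j x^i\leftrightarrow$ the correct entry in the $a$-, $b$-, $d$- or $c$-block of $S_{n,m}(R)$ for (2); $w_k^x\leftrightarrow b_k$-entry and $w_k^y\leftrightarrow c_k$-entry for (3). Since source and target are free $R$-modules of equal rank and the homomorphism carries a basis to a basis, it is an $R$-linear bijection and hence a ring isomorphism. The main obstacle is the combinatorial bookkeeping in (2) and (3): one has to track the Toeplitz shifts carefully to identify which monomial $y^j x^i$ lands in which slot of the $c$-block, and to verify the length-$(n-1)$ alternating relation in $U_n(R)$ uniformly in the parity of $n-1$.
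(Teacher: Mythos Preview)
Your approach is essentially the paper's: both arguments simply exhibit an explicit map (you via the universal property and images of the generators, the paper via a coefficient-to-matrix assignment) and then observe that it carries an $R$-basis of the presented algebra bijectively onto the obvious $R$-basis of the matrix ring. Your version is in fact more explicit than the paper's, which just writes down the map and asserts it is an isomorphism.

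There is, however, a genuine slip in part~(3). With your assignment $X=$ (the $b_1$-matrix) and $Y=$ (the $c_1$-matrix), the alternating product $XYXY\cdots$ of length $n-1$ does \emph{not} vanish: it has a $1$ in position $(1,n)$, the top-right corner, not past it. For instance, in the $n=4$ case your $X$ has $1$'s at $(1,2),(3,4)$ and $Y$ at $(2,3)$, so $XYX$ has a $1$ at $(1,4)$ and is the $b_3$-matrix, not zero. Consistently with this, your basis correspondence $w_k^x\leftrightarrow b_k$, $w_k^y\leftrightarrow c_k$ cannot be right either, since the $x$-words run only up to length $n-2$ while the $b_k$ run up to $k=n-1$.

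The fix is simply to swap the roles of $x$ and $y$: send $x$ to the $c_1$-matrix and $y$ to the $b_1$-matrix. Then the length-$(n-1)$ alternating word in $X,Y$ starting with $X$ has its leftmost possible nonzero entry at position $(2,n+1)$, which is indeed outside the $n\times n$ matrix, so it vanishes; and the basis correspondence becomes $w_k^x\leftrightarrow c_k$ ($1\le k\le n-2$), $w_k^y\leftrightarrow b_k$ ($1\le k\le n-1$), matching the index ranges. This is exactly the choice implicit in the paper's map $\phi$, where the monomial $y^{i_1}x^{i_2}y^{i_3}\cdots$ with $i_1=1$ and the rest $0$ (i.e.\ the coefficient of $y$) lands in the $b_1$-slot. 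With that swap your argument goes through.
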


\begin{proof}
(1) We assume $f = a+ \sum_{i=1}^{n-1}b_ix^i + \sum_{j=1}^{m-1}c_jy^j \in A_{n,m}(R)$. We define $\varphi:  A_{n,m}(R) \to  T_{n,m}(R)$ as 
$$\varphi(f)=
\left(\begin{array}{@{}c|c@{}}
  \begin{matrix}
  a & b_1 & b_2 & \cdots & b_{n-1} \\
  0 & a & b_1 & \cdots & b_{n-2} \\
  0 & 0 & a & \cdots & b_{n-3} \\
  \vdots & \vdots & \vdots & \ddots & \vdots \\
  0 & 0 & 0 & \cdots & a
  \end{matrix}
  &\bigzero \\
\hline
  \bigzero &
  \begin{matrix}
  a & c_1 & c_2 & \cdots & c_{m-1} \\
  0 & a & c_1 & \cdots & c_{m-2} \\
  0 & 0 & a & \cdots & c_{m-3} \\
  \vdots & \vdots & \vdots & \ddots & \vdots \\
  0 & 0 & 0 & \cdots & a
  \end{matrix}
\end{array}\right). $$
It can be shown that $\varphi$ is a ring isomorphism.

(2) We assume $f \in B_{n,m}(R)$ such that 
\begin{align*}
    f &=  a_{00}y^0x^0 + a_{01}y^0x^1 + \cdots + a_{0,n-1}y^0x^{n-1}  \\
      &+  a_{10}y^1x^0 + a_{11}y^1x^1 + \cdots + a_{1,n-1}y^1x^{n-1}   \\
      & \qquad \vdots   \qquad \qquad \vdots  \qquad \qquad \qquad \qquad  \vdots   \\
      &+ a_{m-1,0}y^{m-1}x^0 + a_{m-1,1}y^{m-1}x^1 + \cdots + a_{m-1,n-1}y^{m-1}x^{n-1}.    \\
\end{align*}
We define $\psi :  B_{n,m}(R) \to  S_{n,m}(R)$ as 
$$\psi(f)=
 \begin{pmatrix}
   a_{00} & a_{10} & \cdots & a_{m-1,0} & a_{m-1,1} & \cdots & a_{m-1,n-1}\\
   \vdots  & \ddots & \ddots & \vdots & \vdots & \ddots & \vdots \\
   0 & \cdots & a_{00} & a_{10} & a_{11} & \cdots & a_{1,n-1} \\
   0 & \cdots & 0 & a_{00} & a_{01} & \cdots & a_{0,n-1} \\
   \vdots  & \ddots & \ddots & \vdots & \vdots & \ddots & \vdots \\
   0 & \cdots & 0 & 0  & \cdots & a_{00} & a_{0,1} \\
   0 & \cdots & 0 & 0  & \cdots & 0 & a_{00}
\end{pmatrix}.$$

(3) We introduce the coefficients as follows,
$$f= \sum_{0\le i_j \le 1\atop 1 \le j \le n-1}d_{(i_1, \dots , i_{n-1})}\underbrace{y^{i_1}x^{i_2}y^{i_3}x^{i_4}...}_{\text{$n-1$ words}}\in C_{n}(R)$$

We define $\phi :  C_{n}(R) \to  S_{n,m}(R)$ as 
$$\phi(f)=
\begin{pmatrix}
   d_{(0,0,0, \dots,0)} & d_{(1,0,0, \dots,0)} & d_{(1,1,0, \dots,0)} & d_{(1,1,1, \dots,0)} & \cdots & d_{(1,1,1, \dots,1)}\\
   0 & d_{(0,0,0, \dots,0)} & d_{(0,1,0, \dots,0)} & d_{(0,1,1, \dots,0)} & \cdots & d_{(0,1,1, \dots,1)} \\
   0 & 0 & d_{(0,0,0, \dots,0)} & d_{(1,0,0, \dots,0)} & \cdots & d_{(1,\dots,1,0,0)}\\
   0 & 0 & 0 & d_{(0,0,0, \dots,0)} & \cdots & d_{(0,1,\dots,1,0,0)} \\
   \vdots  & \vdots & \vdots & \vdots & \ddots & \vdots \\
   0 &  0 & 0  & 0  & \cdots & d_{(0,0,0, \dots,0)}
\end{pmatrix}.$$
\end{proof}

\begin{example}
Let R be a ring, then we have: 

(1) $R\left[ x,y | x^2=xy=y^2=0  \right] \cong \left\{  
\begin{pmatrix}
    a_1 & a_2 & 0 & 0 \\
    0 & a_1 & 0 & 0 \\
    0 & 0 & a_1 & a_3 \\
    0 & 0 & 0 & a_1
\end{pmatrix} : a_i \in R \right\}.
$

(2) $R\left\langle x,y | x^2=xy=y^2=0  \right\rangle \cong \left\{  
\begin{pmatrix}
    a_1 & a_2 & a_3 \\
    0 & a_1 & a_4 \\
    0 & 0 & a_1
\end{pmatrix} : a_i \in R \right\}. $

(3) $R\left\langle x,y | x^2=xyx=y^2=0  \right\rangle \cong \left\{  
\begin{pmatrix}
    a_1 & a_2 & a_3 & a_4 \\
    0 & a_1 & a_5 & a_6 \\
    0 & 0 & a_1 & a_2 \\
    0 & 0 & 0 & a_1
\end{pmatrix} : a_i \in R \right\}\\ \cong T(T(R,R), M_2(R)).$
    
\end{example}

\begin{proposition}
Let R be a ring. Then, the following statements are equivalent:

(1) $R$ is a   UnitUC ring. 

(2) $A_{n,m}(R)$ is a   UnitUC ring.

(3) $B_{n,m}(R)$ is a   UnitUC ring.

(4) $C_{n}(R)$ is a   UnitUC ring.

\end{proposition}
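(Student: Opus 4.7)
The plan is to exploit the isomorphisms $A_{n,m}(R) \cong T_{n,m}(R)$, $B_{n,m}(R) \cong S_{n,m}(R)$, and $C_n(R) \cong U_n(R)$ from Lemma \ref{section 3 lemma}, which replace the three polynomial-style rings by explicit matrix realizations. In each of these matrix realizations the diagonal of every matrix is a single repeated scalar $a \in R$, so the assignment $\phi \colon E(R) \to R$ that extracts this common diagonal entry is a well-defined set map, where $E(R)$ denotes any one of $T_{n,m}(R)$, $S_{n,m}(R)$, $U_n(R)$. A direct inspection of the upper-triangular multiplication shows $\phi$ is a surjective ring homomorphism: for any two matrices $X,Y \in E(R)$, the $(1,1)$-entry of $XY$ equals $X_{11}Y_{11}$ since all other summands $X_{1k}Y_{k1}$ vanish due to $Y_{k1}=0$ for $k>1$.

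Next I would show that $K := \ker(\phi)$ is a nil ideal of $E(R)$. Elements of $K$ are strictly upper-triangular matrices sitting inside $T_k(R)$ for some $k = k(n,m)$, hence they are nilpotent with index bounded by $k$. Consequently $K \subseteq J(E(R))$, and Lemma \ref{ess lemma for ex}(1) immediately yields the implications $(2)\Rightarrow(1)$, $(3)\Rightarrow(1)$, and $(4)\Rightarrow(1)$.

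For the converse implications $(1)\Rightarrow(2),(3),(4)$, I would invoke Lemma \ref{ess lemma for ex}(2), which additionally requires that $E(R)$ be clean. Since any UnitUC ring is clean by definition, $R$ is clean; combined with nilness of $K$, this forces $E(R)$ to be clean: idempotents lift modulo any nil ideal, and every element of $1+K$ is a unit via the geometric-series inverse $(1+k)^{-1}=\sum_{i\ge 0}(-k)^i$. Hence any clean decomposition $\bar a = \bar e + \bar u$ in $R \cong E(R)/K$ lifts to a clean decomposition in $E(R)$, establishing cleanness of $E(R)$, and Lemma \ref{ess lemma for ex}(2) then closes the argument.

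The main obstacle, as I anticipate it, is not conceptual but computational: confirming that $\phi$ is indeed a ring homomorphism for the intricate multiplication tables of $S_{n,m}(R)$ and $U_n(R)$, and that $K$ is nilpotent rather than merely nil. Both boil down to the same observation — that all off-diagonal positions are strictly above the main diagonal within some ambient $T_k(R)$, so strict upper triangularity is preserved under multiplication and iterates down to zero after finitely many steps. Once this bookkeeping is dispatched, the proof reduces entirely to two applications of Lemma \ref{ess lemma for ex}.
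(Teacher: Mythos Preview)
Your proposal is correct and follows essentially the same approach as the paper: the paper simply writes ``the proof is similar to that of Proposition~\ref{pro3.41}'', and that proposition is proved by exactly the projection-to-the-diagonal map $\phi$ with nilpotent kernel, followed by two applications of Lemma~\ref{ess lemma for ex}. Your write-up is just a more explicit unpacking of that one-line reference.
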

\begin{proof}
    The proof is similar to that of Proposition \ref{pro3.41}, so remove the details.
\end{proof}


\bibliographystyle{amsplain}

\end{document}